\numberwithin{equation}{section}
\definecolor{c20}{rgb}{0.,1,0.}
\definecolor{c30}{rgb}{0.,0.,1.}
\definecolor{c40}{rgb}{1,0.1,0.7}
\definecolor{c50}{rgb}{1,0,0}
\definecolor{c60}{rgb}{1,0.9,0.1}
\def\cL#1{\textcolor{c50}{#1}}
\def\cL#1{#1}
\def\cg#1{\textcolor{c50}{#1}}
\def\cg#1{#1}
\def\hat{\widehat}
\def\bar{\overline}
\def\IF{\infty}
\begin{document}
\newcommand{\BQN}{\begin{eqnarray}}
\newcommand{\EQN}{\end{eqnarray}}
\newcommand{\BQNY}{\begin{eqnarray*}}
\newcommand{\EQNY}{\end{eqnarray*}}
\newcommand{\nelem}[1]{{Lemma \ref{#1}}}
\newcommand{\neprop}[1]{{Proposition \ref{#1}}}
\newcommand{\netheo}[1]{{Theorem \ref{#1}}}
\newcommand{\prooftheo}[1]{ \textsc{Proof of Theorem} \ref{#1} }
\newcommand{\proofprop}[1]{\textsc{Proof of Proposition} \ref{#1}}
\newcommand{\prooflem}[1]{\textsc{Proof of Lemma} \ref{#1}}
\newcommand{\proofkorr}[1]{\textsc{Proof of Corollary} \ref{#1}}

\newtheorem{theorem}{\bf Theorem\,}[section]
\newtheorem{lemma}{\bf Lemma\,}[section]
\newtheorem{remark}{\bf Remark\,}[section]
\newtheorem{example}{\bf Àý\,}[section]
\newtheorem{proposition}{\bf ÃüÌâ\,}[section]
\newtheorem{corollary}{\bf ÍÆÂÛ\,}[section]
\newtheorem{definition}{\bf Definition \,}[section]
\renewcommand\refname{\large References}
\renewcommand\arraystretch{0.7}

\title{Robust estimations for the tail index of Weibull-type distribution\footnote{Financial support from the National Natural Science Foundation of China grant (11604375), and Chinese Government Scholarship (201708505031), are gratefully acknowledged.}}

\author{Chengping Gong\footnote{School of Mathematics and Statistics, Southwest University, 400715 Chongqing, China.
E-Mail: chengping.gong@foxmail.com}, Chengxiu Ling\footnote{School of Mathematics and Statistics, Southwest University, 400715 Chongqing, China and Department of Actuarial Science, University of Lausanne, Chamberonne 1015 Lausanne, Switzerland, E-Mail: chengxiu.ling@unil.ch}}
\date{\;}

\maketitle
\linespread{1.66}
\setlength\parindent{0.15in}
\setlength\parskip{0.15in}

\newcommand{\quantnet}{\hspace*{\fill} \raisebox{-1pt}{\includegraphics[scale=0.05]{qletlogo}}\,}

\begin{abstract}
\footnotesize{
Based on suitable left-truncated or censored data,
two flexible classes of $M$-estimations of Weibull tail coefficient are proposed with two additional parameters bounding the impact of extreme contamination. Asymptotic normality with $\sqrt n$-rate of convergence is obtained. Its robustness is discussed via its asymptotic relative efficiency and influence function.  It is further demonstrated by a small scale of simulations and an empirical study on CRIX.
\noindent

\noindent {\bf Keywords}: robust; Weibull tail coefficient; influence function; asymptotic relative efficiency; CRIX}

\noindent {\bf JEL Classification}: C13, G10, G31

\noindent {\bf Mathematics Subject Classification (2010)}: 60G70, 62G32, 91B28

\end{abstract}

\newpage

\section{Introduction}\label{Introduction}

\cg{The} estimation of tail quantities plays an important role in extreme value statistics. One challenging problem is to select {extreme} sample fraction to balance the asymptotic variance and bias. Meanwhile, this requires a large and ideal sample from the underlying distribution. Indeed, in practical data analysis, it is not unusual to encounter outliers or mis-specifications of the underlying model which may have a considerable impact on the estimation results.~A typical treatment is then required for instance by down-weighting  its influence on the estimation in various standards, see~e.g., \cite{basu1998robust}, \cite{beran2012on}, \cite{vandewalle2004robust,vandewalle2007robust}, \cite{goegebeur2015robust,Liu2010subexponential}.

{Given the wide applications of Weibull-type distributions and little studies on its robust estimations}, this paper shall address this issue concerning its tail quantities. Let  $X_1, \ldots, X_n$ be an~independent and identically distributed sequence from parent $X\sim F(x)$ satisfying
\BQN
\label{def_Weibull}
1- F (x)=\exp\{-x^{\alpha}\ell(x)\} \quad \mbox{for large } x,
\EQN
where $\alpha >0$ is the so-called {Weibull tail coefficient} (WTC)  and $\ell(x)$ is a slowly varying function at infinity, i.e., (cf. \cite{bingham1987regular})
\[
\lim_{t \rightarrow \infty} \ell(tx) / \ell(x)=1, \quad \forall x>0.
\]

Prominent instances of Weibull-type distributions of $F$ are  Gaussian ($\alpha = 2$), gamma, Logistic and \cg{exponential} ($\alpha=1$) and \cg{extended} Weibull (any $\alpha>0$) distributions (cf. \mbox{\cite{gardes2008estimation}}). {As} an important subgroup of light-tailed distributions, Weibull-type distributions are of great use in hydrology, meteorology, environmental and actuarial science, to name but a few (cf. \cite{beirlant1992modeling, Hashorva2014tail, Debicki2018extremes, Arendarczyk2011asymptotics}). Meanwhile, the WTC governs the tail behavior of $F$, {and the larger the WTC is, the faster the tail of $F$ decays.} Dedicated estimations of WTC have {thus} been proposed and most of them are based on an asymptotically vanishing sample fraction of high {quantiles, which asymptotic} normality is achieved under certain second-order condition specifying the rate of convergence of $\ell(tx)/\ell(t)$ to 1, see e.g., \cite{girard2004hill}, \cite{gardes2008estimation}, \cite{goegebeur2010generalized}, \cite{Asimit2010pitfall}.
{Indeed, most data-sets from {applied-oriented fields} are relative large and with certain deviations from the pre-supposed model. For instance, it occurs with} the slowly varying function $\ell(\cdot)$ (where $1- F (x)=\exp\{-x^{\alpha}\ell(x)\}$) in the left part of the distributions. To the best of our knowledge, it is new to  investigate the robust Weibull tail  estimations when only a small sample is available.

Inspired by the theory of robust inference in \cite{Huber1964}, we propose two  classes of robust estimations of WTC. 
{Denote for given $c_0>0$}
\BQN
\label{def_g}
h(t)= (c_0t -1)\ln t -1, \quad t>0.
\EQN
Clearly, we have $g(x; \alpha)= -\alpha^{-1} h(x^\alpha)$ is the score function of
\BQN
\label{def_F}
X\sim F_W(x; \alpha) = 1- \exp\{-c_0 x^\alpha\}, \quad x>0, \, \alpha>0.
\EQN
{Please note that $h(t),\,t>0$ is not monotone and thus one cannot directly weaken the effect of outliers by bounding score function $g(x; \alpha)$.} On the other hand, most interest of risk management lies principally in the extreme large risks. This motivates us to consider some tailored  $h(t)$ according to certain left-truncated/censored Weibull distributions with the same Weibull tail coefficient $\alpha$ under considerations. Namely, we set below $t_0 =\arg \min_{t\ge 1}{h(t)}$ with $h$  specified by \eqref{def_g} and
\BQN
\label{def_h}
\tilde h(t) = h(t),\, t\ge1,\quad h^*(t) = h(t),\, t\ge t_0,
\EQN
which properties are stated as below.

\begin{lemma}
\label{L1}
{Let $X\sim F_W(x; \alpha)$ and $\tilde X= X |\{ X \ge 1\} \sim \tilde F_W(x; \alpha)$.  Then $\tilde h^{\leftarrow}(y)= \inf\{t \ge 1: \tilde h(t) \ge y\}, y\ge -1$} is strictly increasing, and $-\alpha^{-1} \tilde h(x^\alpha)$ is the score function of $\tilde F_W(x; \alpha)$. Moreover, $h^*(x^\alpha) , \, x\ge x_0 = t_0^{1/d_0}$
is strictly increasing provided that $\alpha \ge d_0>0$.
\end{lemma}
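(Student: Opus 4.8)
The entire lemma reduces to elementary monotonicity of the single real function $h$ in \eqref{def_g}, so my plan is to settle the shape of $h$ once and for all and then read off the three assertions. First I would compute $h'(t)=c_0\ln t+c_0-1/t$ and $h''(t)=(c_0t+1)/t^{2}>0$ for every $t>0$: hence $h$ is strictly convex and $h'$ strictly increasing on $(0,\infty)$, and since $h'(t)\to-\infty$ as $t\downarrow0$ and $h'(t)\to+\infty$ as $t\to\infty$, $h'$ has a unique zero $t^{\ast}$, so $h$ strictly decreases on $(0,t^{\ast})$ and strictly increases on $(t^{\ast},\infty)$. Because $h'(1)=c_{0}-1$ we get $t^{\ast}\le1\iff c_{0}\ge1$, and in every case $t_{0}=\arg\min_{t\ge1}h(t)=\max(1,t^{\ast})\ge1$, with $h(1)=-1$ and $h$ strictly increasing on $[t_{0},\infty)$ (with $h'>0$ on $(t_{0},\infty)$).

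For the first claim, observe that for $y\ge-1=h(1)\ge h(t_{0})$ the restriction $h|_{[t_{0},\infty)}$ is a strictly increasing bijection onto $[h(t_{0}),\infty)\ni y$, so $h(t)=y$ has a unique root $\tau(y)\in[t_{0},\infty)$ which is strictly increasing in $y$; meanwhile on $[1,t_{0}]$ one has $h(t)\le h(1)=-1\le y$, with equality only when $t=1$ and $y=-1$. Thus $\{t\ge1:\tilde h(t)\ge y\}=[\tau(y),\infty)$ for $y>-1$ and $=\{1\}\cup[\tau(-1),\infty)$ for $y=-1$, whence $\tilde h^{\leftarrow}(y)=\tau(y)$ for $y>-1$ and $\tilde h^{\leftarrow}(-1)=1$; since $1\le\tau(-1)\le\tau(y)$ with $\tau$ strictly increasing, $\tilde h^{\leftarrow}$ is strictly increasing on $[-1,\infty)$. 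The score identities are one-line checks: from $\ln f_{W}(x;\alpha)=\ln(c_{0}\alpha)+(\alpha-1)\ln x-c_{0}x^{\alpha}$ one gets $\tfrac{\partial}{\partial\alpha}\ln f_{W}(x;\alpha)=-\alpha^{-1}h(x^{\alpha})$ (already noted in the text), and since $\tilde f_{W}(x;\alpha)=f_{W}(x;\alpha)/(1-F_{W}(1;\alpha))$ on $x\ge1$ with $1-F_{W}(1;\alpha)=e^{-c_{0}}$ independent of $\alpha$, differentiation gives $\tfrac{\partial}{\partial\alpha}\ln\tilde f_{W}(x;\alpha)=-\alpha^{-1}h(x^{\alpha})=-\alpha^{-1}\tilde h(x^{\alpha})$ for $x\ge1$, as $\tilde h=h$ there.

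For the last claim, if $\alpha\ge d_{0}>0$ and $x\ge x_{0}=t_{0}^{1/d_{0}}$, then $t_{0}\ge1$ forces $t_{0}^{1/d_{0}}\ge t_{0}^{1/\alpha}$, so $x^{\alpha}\ge t_{0}$ and $h^{\ast}(x^{\alpha})=h(x^{\alpha})$ is well defined; differentiating, $\tfrac{d}{dx}h^{\ast}(x^{\alpha})=\alpha x^{\alpha-1}h'(x^{\alpha})\ge0$, strictly positive whenever $x^{\alpha}>t_{0}$ by the first paragraph, so $x\mapsto h^{\ast}(x^{\alpha})$ is strictly increasing on $[x_{0},\infty)$. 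The one point that needs care is the regime $c_{0}<1$ in the second paragraph: there $\tilde h$ is \emph{not} monotone on $[1,\infty)$ and $\tilde h^{\leftarrow}$ actually jumps at $y=-1$ (from $1$ up to $\tau(-1)>1$), so one has to verify explicitly that it nonetheless remains strictly increasing; the remaining computations, together with the observation $t_{0}\ge1$ that makes the exponent comparison $t_{0}^{1/d_{0}}\ge t_{0}^{1/\alpha}$ go the right way, are routine once the convexity of $h$ is in hand.
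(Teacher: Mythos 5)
Your proof is correct and follows essentially the same route as the paper's: compute $h'$ and $h''>0$, use convexity to locate the unique minimizer $t_0=\max(t^{\ast},1)$, read off the score function from the $\alpha$-free truncation constant $e^{-c_0}$, and use $t_0\ge1\Rightarrow t_0^{1/d_0}\ge t_0^{1/\alpha}$ for the last claim. If anything, your explicit case analysis of $\tilde h^{\leftarrow}$ near $y=-1$ (the jump from $1$ to $\tau(-1)$ when $c_0<1$) is more careful than the paper's one-line justification of that monotonicity.
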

Basically, both $\tilde h^\leftarrow$ and $h^*$ are certain modifications of $h$ via its {valued interval and domain region. Now, we are ready to state} our $M$-estimations of Weibull tail coefficient using {the $M$-estimation process based on the alternative samples}  $\tilde X_i$'s and $X_i^*$'s  respectively from  \mbox{$\tilde X:= X |\{ X \ge 1\} \sim \tilde F$} and $X^* := \max(X, x_0)\sim F^*$ {where $X_i$'s is a random sample from $X\sim F$}. Set below $[y]_{v}^{u}=\min(\max(y, v), u),\}, v<u$ and $\Im$ is a set of distributions with support in $(0, \infty)$.

\begin{definition}
\label{def_Tt} Let $F_W(x; \alpha)$ and \cg{$\tilde h, h^*$} be given by \eqref{def_F} and \eqref{def_h}, respectively.
Define the psi-function $\tilde\psi$ as
\BQN
\label{def_psi1}
\tilde \psi_{v, u}(y; \alpha) &=& [\tilde h(y^\alpha)]_v^u - \int_{1}^\infty  [\tilde h(z^\alpha)]_v^u d \tilde F_W(z; \alpha) \notag\\
&=& [\tilde h(y^\alpha)]_v^u - \left[v + \int_v^u \exp\{-c_0 [\tilde h^\leftarrow(z)-1]\} \, dz\right], \, -1 \le v< u < \infty.
\EQN
\cg{Then the functional $\tilde T(F)$ as the solution of the equation}
\BQNY
\tilde \lambda_F(t)=\int_{1}^\infty \tilde\psi_{v, u}(y; t)d \tilde F(y) =0,  \quad F\in\Im,
\EQNY

is called huberized {Weibull tail} $M$-functional corresponding to $\tilde \psi$. The corresponding $M$-estimator $\tilde T_{n}= \tilde T^{(v, u)}_n(F_n)$,  the solution of the equation
\BQNY
\tilde \lambda_{F_n}(t) =\sum_{j=1}^{m} \tilde \psi_{v,u}(\tilde X_{j}; t)=0,\quad m = \#\{1\le i\le n: X_i \ge 1\},
\EQNY
is the  huberized Weibull tail $M$-estimator of $\alpha$. If further \cg{$0 < d_0 \leq \alpha\leq d_1$}, then define the psi-function $\psi^*$ with \cg{$x_0 = \big(\arg \min_{t\ge 1}{h(t)}\big)^{1/d_0}$ and $ v_0 = h(x_0^{d_1})$}
\BQN
\label{def_psi}
\psi_{v, u}^*(y; \alpha) &=& [h^*(y^\alpha)]_v^u - \int_{x_0}^\infty  [h^*(z^\alpha)]_v^u d F_W^*(z; \alpha) \notag\\
&=& [h^*(y^\alpha)]_v^u - \left[v + \int_v^u \exp\{-c_0(h^*)^\leftarrow(z)\} \, dz\right], \quad v_0\le v< u<\infty\cL.
\EQN
Then the functional $T^*(F), F\in \Im$ as the solution of the equation
\BQNY
\lambda^*_{F}(t)=\int_{x_0}^\infty  \psi^*_{v, u}(y; t)d F^*(y)=0,
\EQNY

is called huberized {Weibull tail}  $M$-functional corresponding to $\psi^*$.~The corresponding $M$-estimator $T^*_{n}= T_n^{*(v, u)}(F_n)$,  the  solution of the equation
\BQNY
\lambda^*_{F_n}(t) = \sum_{i=1}^{n}\psi _{v,u}(X_{i}^*; t)=0,
\EQNY
is the huberized $M$-estimator of the Weibull tail coefficient $\alpha$.
\end{definition}
We remark that \eqref{def_psi1} and \eqref{def_psi} \cg{hold} since
\BQNY
\tilde F_W (x; \alpha) =1-\exp\{-c_0[x^\alpha -1]\}, \quad x\ge1; \quad F^*_W(x; \alpha) =  \left\{\begin{array} {ll}
	1-\exp\{-c_0 x^\alpha\}, &x> x_0,\\
	0, & x \le x_0.
\end{array}
\right.
\EQNY

	\begin{figure}[!htb]
	\centering
	\setlength{\abovecaptionskip}{0pt}
	\includegraphics[width=16cm,height=8cm]{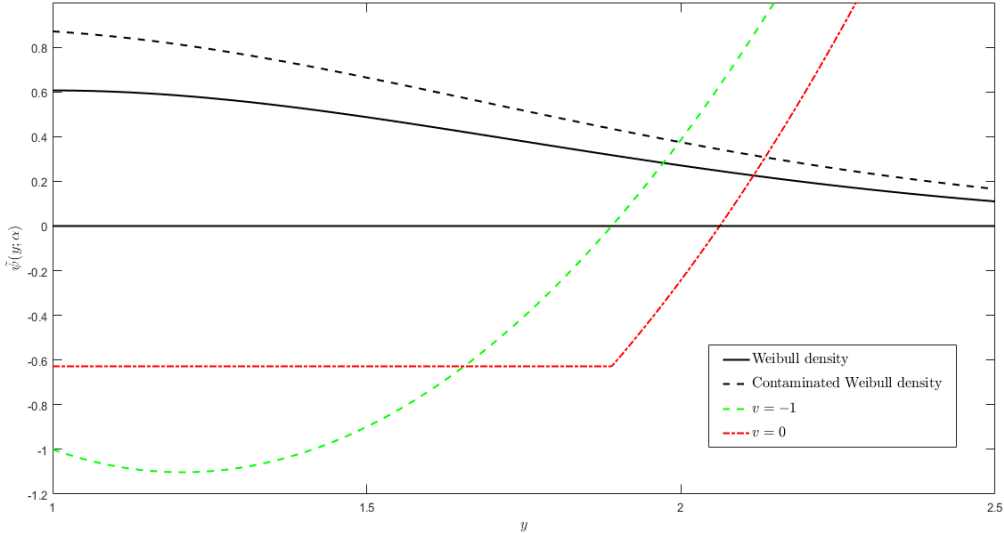}\\
	\caption{ \cg{Psi-functions $\tilde \psi$} for the huberized $M$-estimators $\tilde{T}^{(v, \infty)}_n$. Here the truncated densify functions are generated from the Weibull $F_W(x; \alpha)$ and contaminated Weibull $F_\epsilon(x) = (1-\epsilon) F_W(x; \alpha) + \epsilon \Gamma(x; \lambda, \beta)$ with $\alpha=2, c_0=0.5, \epsilon=0.3, \lambda= 1, \beta=1$.}\label{fig.1}
\end{figure}
{Figure \ref{fig.1} illustrates the lower huberization by comparing the score function
	$\tilde\psi_{-1,\IF}(y; \alpha)$ of $\tilde F_W$ (recall \nelem{L1}) with $\tilde\psi_{v,\IF}(y; \alpha)$.  We see that the {contaminated Weibull density by Gamma (see \eqref{d Gamma} below for its definition)} has almost the same shape as the \cg{pre-supposed} Weibull one in the right tail, and therefore lower-huberized psi-function $\tilde{\psi}_{v,\infty}(y;\alpha)$ {can} restrict the influence of all observations below $y_0 = (\tilde h^\leftarrow(v))^{1/\alpha}$ instead of removing them completely. On the other hand, for all $y > y_0$, the $\tilde\psi_{v, \IF}(y; \alpha)$ is shifted downwards for the consistency purpose. One may similarly analyze the $\psi^*$ function.
	
	The paper principally investigates the asymptotic behavior of the proposed new classes of $M$-estimations of Weibull tail coefficient. Details are as follows.
	
	In Section \ref{Section 2}, we consider Weibull distributions in Theorems \ref{T1} and \ref{T2} {and establish} its asymptotic normality of the $M$-estimations $\tilde T_n$ and $T_n^*$ {with} $\sqrt n$-rate of convergence, which is rather faster than that of most classical Weibull tail estimations such as the Hill-type estimation, see Theorem 2 in \mbox{\cite{girard2004hill}}.~Generally, we study related asymptotic properties in Theorems \ref{T3} and \ref{T4} {when the underlying risk follows Weibull-type distributions specified in \eqref{def_Weibull}. Some bounded asymptotic bias may appear due to its deviations}  from the Weibull distributions.
	
	In Section \ref{Section 3}, using asymptotically relative efficiency (AEFF) and influence function (IF), we~investigate the robustness (\netheo{T5}) and the bias, which \cg{are} further related to the \cg{choices} of flexible parameters $v$ and $u$. These results {are useful, especially when the practical regulators in risk management consider} the trade-off between the robustness and consistency.
	
	In Section \ref{Section 4}, a small scale of Monte Carlo simulations and an empirical study concerning the CRIX proposed by \cite{trimborn2016CRIX} are carried out. We see that {both} $M$-estimations {are robust} and perform very well even for small samples, in comparisons with the classical  maximum likelihood estimations and Hill-type  estimations of the Weibull tail coefficient. We expect the results would be beneficial to both financial practitioners and theoretical experts \cg{in} risk management and extreme value statistics.
	
	The rest of the paper is organized as follows. Main results are given in Section \ref{Section 2} followed with a~section dedicated to the robust analysis. Sections \ref{Section 4} and \ref{Empirical} are devoted to a small scale of Monte Carlo simulations {and an empirical studies on CRIX}.
	All proofs of the results are postulated to  Section \ref{Section 5}.

\section{Asymptotic results}\label{Section 2}
Throughout this section, we keep the same notation as in Introduction and write further $\stackrel{p}{\to}$ and $\stackrel{d}{\to}$ for the convergence in probability and in distribution, respectively. All the limits are taken as $n \to \infty$ unless otherwise stated.

\begin{theorem}\label{T1}
{Let $X_{1},\ldots, X_{n}$ be a random sample from $X\sim F_W(x;\alpha_0)=1-\exp\{-c_0x^{\alpha_0}\},\, x>0, c_0, \alpha_0 >0$. 
	Denote by ${\tilde X_j} \sim \tilde X = X| \{X \ge1\}, \, 1\le j \le {m=\#\{1\le i \le n: X_i \ge 1\}}$, and \cg{by} $\tilde T_n = \tilde T_n^{(v, u)}, -1 \le v < u \cg{<} \infty$} the solution of
\BQNY
\tilde \lambda_{(F_{W})_{n}}(t)=\sum_{i=1}^{m} \tilde\psi _{v,u}(\tilde X_i; t)=0.
\EQNY
Then $\tilde T_n  \stackrel{p}{\to} \alpha_0$ and
\BQN
\label{conv_d}
\sqrt n(\tilde T_n-\alpha_0)\stackrel{d}{\to} N(0,\tilde\sigma^2_{v,u,\alpha_0;F_W}),
\EQN
where, with $\tilde\mu = v + \int_v^u \exp\{-c_0[\tilde h^\leftarrow(z)-1] \} dz$
$$\tilde\sigma^2_{v,u,\alpha_0;F_W}= e^{c_0} \frac{\alpha_0^2}{c_0^2}\frac{(v-\tilde \mu)^{2}+ 2\int_{\tilde h^\leftarrow(v)}^{\tilde h^\leftarrow(u)} [\tilde h(s) - \tilde\mu]\exp\{- c_0(s-1)\} d \tilde h(s)}{\left[\int_{\tilde h^\leftarrow (v)}^{\tilde h^\leftarrow(u)} s \ln  s \exp\{-c_0(s-1) \}  d \tilde h(s)\right]^2}.$$
\end{theorem}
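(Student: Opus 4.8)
The estimator $\tilde T_n$ is an ordinary $M$-estimator, so the plan is to carry out the classical three-step programme for $Z$-estimators --- consistency, asymptotic linearization, and an explicit variance identity --- while accommodating two nonstandard features: the estimating equation rests on a \emph{random} number $m$ of observations, and the Huber clipping $[\,\cdot\,]_v^u$ is not smooth.  First I would reduce to a fixed-size i.i.d.\ problem.  Since $X\sim F_W(\cdot;\alpha_0)=1-\exp\{-c_0x^{\alpha_0}\}$, we have $p:=\P(X\ge1)=\exp\{-c_0\}$, so $m\sim\mathrm{Bin}(n,p)$ with $m\to\infty$ and $m/n\to p$ almost surely, and, conditionally on $m$, the variables $\tilde X_1,\dots,\tilde X_m$ are i.i.d.\ from $\tilde F_W(\cdot;\alpha_0)$; the substitution $W=\tilde X^{\alpha_0}$ gives $W-1\sim\mathrm{Exp}(c_0)$, the only distributional input used below.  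It then suffices to show that $\sqrt m(\tilde T_n-\alpha_0)$ is asymptotically centred Gaussian with some deterministic variance $\sigma^2$, since by $m/n\to p$ and a random-index central limit argument (Anscombe) this yields $\sqrt n(\tilde T_n-\alpha_0)=\sqrt{n/m}\,\sqrt m(\tilde T_n-\alpha_0)\stackrel{d}{\to}N(0,\exp\{c_0\}\sigma^2)$, which already explains the prefactor $\exp\{c_0\}$ in $\tilde\sigma^2_{v,u,\alpha_0;F_W}$.

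Write $\tilde\lambda(t):=\tilde\lambda_{\tilde F_W(\cdot;\alpha_0)}(t)=\E\big[[\tilde h(W^{t/\alpha_0})]_v^u\big]-\tilde\mu$, and note that the centering constant $\tilde\mu=v+\int_v^u\exp\{-c_0(\tilde h^\leftarrow(z)-1)\}\,dz$ does not depend on $t$ (the integral $\int_1^\infty[\tilde h(z^t)]_v^u\,d\tilde F_W(z;t)$ is $t$-free after the substitution $z^t\mapsto z$), so $\partial_t\tilde\psi_{v,u}(y;t)=\partial_t[\tilde h(y^t)]_v^u$.  By construction $\tilde\lambda(\alpha_0)=0$, and by \nelem{L1} the map $t\mapsto[\tilde h(y^t)]_v^u$ is nondecreasing for every $y\ge1$ and strictly increasing for $y$ in a set of positive $\tilde F_W$-mass, whence $\tilde\lambda$ is continuous and strictly increasing near $\alpha_0$ with $\alpha_0$ as its unique zero there.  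Consistency $\tilde T_n\stackrel{p}{\to}\alpha_0$ then follows from a uniform law of large numbers on a compact neighbourhood of $\alpha_0$ --- available because $\tilde\psi_{v,u}(\cdot;t)$ is continuous and takes values in the bounded interval $[v-\tilde\mu,u-\tilde\mu]$ --- together with this strict monotonicity.  For the central limit statement I would appeal to the theory of monotone / non-smooth $Z$-estimators (\cite{Huber1964}): the class $\{\tilde\psi_{v,u}(\cdot;t):|t-\alpha_0|<\delta\}$ is uniformly bounded and Lipschitz in $t$, hence Donsker, which supplies the stochastic-equicontinuity step, while the leading term obeys a plain CLT; this gives
\BQNY
\sqrt m(\tilde T_n-\alpha_0)=-\frac{1}{\tilde\lambda'(\alpha_0)}\,\frac{1}{\sqrt m}\sum_{j=1}^m\tilde\psi_{v,u}(\tilde X_j;\alpha_0)+o_p(1)\stackrel{d}{\to}N\!\left(0,\ \sigma^2\right),\qquad \sigma^2=\frac{\Var\big([\tilde h(W)]_v^u\big)}{\tilde\lambda'(\alpha_0)^2}.
\EQNY

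It remains to make $\sigma^2$ explicit.  Differentiating $\tilde\lambda$ under the expectation at $t=\alpha_0$ is legitimate since the integrand is a.e.\ differentiable in $t$ with a locally dominated difference quotient (the clipping contributing $0$ off a $\tilde F_W$-null set); the chain rule gives, at $t=\alpha_0$, $\partial_t[\tilde h(W^{t/\alpha_0})]_v^u=\alpha_0^{-1}\tilde h'(W)\,W\ln W\,\mathbf{1}\{\tilde h^\leftarrow(v)<W<\tilde h^\leftarrow(u)\}$, so that, with the density $c_0\exp\{-c_0(w-1)\}$ of $W$ on $[1,\infty)$,
\BQNY
\tilde\lambda'(\alpha_0)=\frac{c_0}{\alpha_0}\int_{\tilde h^\leftarrow(v)}^{\tilde h^\leftarrow(u)}s\ln s\,\exp\{-c_0(s-1)\}\,d\tilde h(s)>0 .
\EQNY
For the variance, writing $Y=[\tilde h(W)]_v^u\ge v$ gives $\Var(Y)=\E[(Y-v)^2]-(\tilde\mu-v)^2$ with $\E[(Y-v)^2]=2\int_v^u(z-v)\P(Y>z)\,dz=2\int_v^u(z-v)\exp\{-c_0(\tilde h^\leftarrow(z)-1)\}\,dz$; substituting $z=\tilde h(s)$ and using $\tilde\mu-v=\int_v^u\exp\{-c_0(\tilde h^\leftarrow(z)-1)\}\,dz$ recasts $\Var(Y)$ as $(v-\tilde\mu)^2+2\int_{\tilde h^\leftarrow(v)}^{\tilde h^\leftarrow(u)}[\tilde h(s)-\tilde\mu]\exp\{-c_0(s-1)\}\,d\tilde h(s)$.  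Plugging both expressions into $\sigma^2$ and multiplying by $\exp\{c_0\}$ (first step) reproduces $\tilde\sigma^2_{v,u,\alpha_0;F_W}$ verbatim.

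The main obstacle is the non-smoothness of $[\,\cdot\,]_v^u$: because $\tilde\psi_{v,u}(y;\cdot)$ is not $C^1$, the linearization cannot come from a naive second-order Taylor expansion with a pointwise derivative.  This is exactly the classical robust-statistics situation and is handled as indicated --- the estimating function is bounded and monotone in $t$ (\nelem{L1}), so Huber's monotone $M$-estimator theory applies, and the population map $\tilde\lambda$ is differentiable at $\alpha_0$ all the same, since its ``kink set'' $\{y:\tilde h(y^{\alpha_0})\in\{v,u\}\}$ is $\tilde F_W$-negligible.  A lesser, purely technical, point is the passage through the random index $m$, routine once one records that $(\tilde X_j)_{1\le j\le m}$ is, conditionally on $m$, an i.i.d.\ $\tilde F_W(\cdot;\alpha_0)$-sample.
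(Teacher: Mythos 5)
Your proposal is correct and follows essentially the same route as the paper: both reduce the problem to classical monotone $M$-estimator asymptotics (the paper invokes Serfling's Theorem A, p.~251, you invoke the equivalent Huber/Donsker machinery), compute the same $\tilde\lambda'(\alpha_0)=\frac{c_0}{\alpha_0}\int s\ln s\,e^{-c_0(s-1)}d\tilde h(s)$, obtain the same clipped variance (your layer-cake identity versus the paper's integration by parts are interchangeable computations), and recover the factor $e^{c_0}$ from $m/n\stackrel{p}{\to}e^{-c_0}$. Your treatment of the random index $m$ and of the non-smoothness of the clipping is in fact more explicit than the paper's.
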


{\remark
\label{R1}
{As stated in \nelem{L1},} $-\alpha^{-1} \tilde h(x^\alpha), x\ge1$ is the score function of $\tilde F_W(x; \alpha)$. Therefore, $\tilde T_n = \tilde T_n^{(v, u)}$ with $v=-1, u= \infty$ reduces to the maximum likelihood estimation of $\alpha$. This fact will be used in \netheo{T5} for the asymptotic relative efficiency analysis. Additionally, we have by laws of large numbers that $m= m(n)$ {satisfies} 
$m/n \cg{\stackrel{p}{\to}} P\{X \ge 1\}= e^{-c_0}$.
}

\begin{theorem}\label{T2}
Let $X_{1}, \cg{\ldots}, X_{n}$ be a random sample from $X\sim F_W(x;\alpha_0)=1-\exp\{-c_0x^{\alpha_0}\},\, x>0$ and \cg{$ 0 < d_0 \leq \alpha_0\leq d_1$}. Denote by $X_i^* = \max(X_i, x_0), 1\le i\le n$ with $x_0=(\arg\min_{t\ge1} h(t))^{1/d_0}$, and by $T_n^* = T_n^{*(v, u)}, \, n\in \mathbb{N},\, u> v\ge \cg{v_0= h(x_0^{d_1})}$ the solution of
\BQNY
\lambda^*_{(F_W)_{n}}(t)=\sum_{i=1}^{n}\psi^*_{v,u}(X_i^*; t)=0.
\EQNY
Then $T_n^*  \stackrel{p}{\to} \alpha_0$ and
\BQNY
\sqrt n(T_n^*-\alpha_0)\stackrel{d}{\to} N(0,\sigma^{*2}_{v,u,\alpha_0;F_W}),
\EQNY
where, with $\mu^* = v + \int_v^u \exp\{-c_0(h^*)^\leftarrow(z)\} \, dz$
$$\sigma^{*2}_{v,u,\alpha_0;F_W}=\frac{\alpha_0^2}{c_0^2} \frac{(v-\mu^*)^{2}+ 2\int_{(h^*)^\leftarrow(v)}^{(h^*)^\leftarrow(u)} [h^*(s) - \mu^*]\exp\{- c_0s\} d h^*(s)}{\left[\int_{(h^*)^\leftarrow (v)}^{(h^*)^\leftarrow(u)} s \ln  s \exp\{-c_0 s \}  d h^*(s)\right]^2}.$$
\end{theorem}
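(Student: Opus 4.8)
The plan is to follow the proof of \netheo{T1} step by step, the only new features being those caused by using the \emph{censored} variables $X_i^*=\max(X_i,x_0)$ rather than truncated ones. First I would restrict $t$ to the compact set $[d_0,d_1]$; by \nelem{L1} the map $s\mapsto h^*(s)$ is then well defined and strictly increasing on $[x_0^{\,t},\infty)\subseteq[t_0,\infty)$, so for every fixed $y\ge x_0$ the map $t\mapsto[h^*(y^{\,t})]_v^u$ is continuous and non-decreasing, and hence so are $t\mapsto\lambda^*_{(F_W)_n}(t)$ and $t\mapsto\lambda^*_{F_W}(t)$. The structural fact underlying everything is that, under $X\sim F_W(\cdot;\alpha_0)$, the variable $S:=(X^*)^{\alpha_0}=\max(X^{\alpha_0},x_0^{\alpha_0})$ has $\P\{S>s\}=\e^{-c_0 s}$ for $s\ge x_0^{\alpha_0}$ plus an atom of mass $1-\e^{-c_0 x_0^{\alpha_0}}$ at $x_0^{\alpha_0}$, while $t_0\le x_0^{\alpha_0}\le x_0^{d_1}$ (since $x_0\ge1$ and $d_0\le\alpha_0\le d_1$), whence $h^*(x_0^{\alpha_0})\le v_0\le v$ and $a:=(h^*)^\leftarrow(v)\ge x_0^{\alpha_0}\ge1$. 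Inserting this into the definition of $\psi^*_{v,u}$ and substituting $s=y^{\alpha_0}$ gives $\E\big[[h^*((X^*)^{\alpha_0})]_v^u\big]=\mu^*$, i.e.\ $\lambda^*_{F_W}(\alpha_0)=0$, and $\alpha_0$ is the unique zero of the population equation because $\lambda^*_{F_W}$ turns out to be strictly increasing at $\alpha_0$.

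For consistency I would use the standard argument for monotone $M$-equations: $\psi^*_{v,u}(\cdot;t)$ is bounded, so the law of large numbers gives $\lambda^*_{(F_W)_n}(t)\stackrel{p}{\to}\lambda^*_{F_W}(t)$ pointwise, and together with monotonicity and $\lambda^*_{F_W}(\alpha_0-\varepsilon)<0<\lambda^*_{F_W}(\alpha_0+\varepsilon)$ this traps any solution $T_n^*$ in $(\alpha_0-\varepsilon,\alpha_0+\varepsilon)$ with probability tending to $1$, so $T_n^*\stackrel{p}{\to}\alpha_0$. For the limit law I would establish the $Z$-estimator expansion
\[
\sqrt n\,(T_n^*-\alpha_0)=-\big(\dot\lambda^*_{F_W}(\alpha_0)\big)^{-1}\,n^{-1/2}\sum_{i=1}^{n}\psi^*_{v,u}(X_i^*;\alpha_0)+o_P(1),
\]
where $\dot\lambda^*_{F_W}(\alpha_0)$ denotes the derivative of $\lambda^*_{F_W}(t)$ at $t=\alpha_0$. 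Since $\psi^*_{v,u}(y;\cdot)$ has two corners (where $h^*(y^{\,t})$ crosses $v$ and $u$) and so is only piecewise $C^1$ in $t$, I would justify this expansion not by a naive Taylor step but either by stochastic equicontinuity of the empirical process indexed by $t$ — using that $t\mapsto\psi^*_{v,u}(y;t)$ is Lipschitz with a constant independent of $y$, because on $\{v<h^*(y^{\,t})<u\}$ one has $y^{\,t}\le(h^*)^\leftarrow(u)$ so that $\ln y$ and $(h^*)'(y^{\,t})$ are bounded — or by applying the fundamental theorem of calculus to the Lipschitz function $t\mapsto\lambda^*_{(F_W)_n}(t)$ together with a uniform law of large numbers for its a.e.\ derivative near $\alpha_0$; both are routine. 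The leading term is i.i.d., bounded and centred, so the central limit theorem gives $n^{-1/2}\sum_i\psi^*_{v,u}(X_i^*;\alpha_0)\stackrel{d}{\to}N\big(0,\Var\,\psi^*_{v,u}(X^*;\alpha_0)\big)$, and Slutsky's lemma yields $\sqrt n(T_n^*-\alpha_0)\stackrel{d}{\to}N(0,\sigma^{*2}_{v,u,\alpha_0;F_W})$ with $\sigma^{*2}_{v,u,\alpha_0;F_W}=\Var\,\psi^*_{v,u}(X^*;\alpha_0)\big/\big(\dot\lambda^*_{F_W}(\alpha_0)\big)^2$.

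It then remains to evaluate the two constants. Differentiating under the integral sign (legitimate by the uniform Lipschitz bound) with $\partial_t h^*(y^{\,t})=(h^*)'(y^{\,t})\,y^{\,t}\ln y$ and substituting $s=y^{\alpha_0}$ gives $\dot\lambda^*_{F_W}(\alpha_0)=\tfrac{c_0}{\alpha_0}\int_{(h^*)^\leftarrow(v)}^{(h^*)^\leftarrow(u)}s\ln s\,\e^{-c_0 s}\,\d h^*(s)$, which is strictly positive since $(h^*)^\leftarrow(v)\ge1$; note the atom at $x_0^{\alpha_0}$ contributes nothing here because $h^*(x_0^{\alpha_0})\le v$. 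For the variance, with $a=(h^*)^\leftarrow(v)$ and $b=(h^*)^\leftarrow(u)$, the clipped score $[h^*(S)]_v^u$ equals $v$ on $\{S\le a\}$ (this absorbs the atom, as $h^*(x_0^{\alpha_0})\le v$), equals $u$ on $\{S>b\}$, and equals $h^*(S)$ in between; computing $\E\big[([h^*(S)]_v^u)^2\big]$ over these three events and integrating by parts gives $\E\big[([h^*(S)]_v^u)^2\big]=v^2+2\int_a^b h^*(s)\,\e^{-c_0 s}\,\d h^*(s)$, and subtracting $\mu^{*2}$ while using $\mu^*=v+\int_a^b\e^{-c_0 s}\,\d h^*(s)$ rearranges it into $(v-\mu^*)^2+2\int_a^b[h^*(s)-\mu^*]\e^{-c_0 s}\,\d h^*(s)$. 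Substituting the two expressions into $\sigma^{*2}_{v,u,\alpha_0;F_W}$ yields the stated formula. The main obstacle is exactly the non-smoothness of $\psi^*_{v,u}(y;\cdot)$ at the clipping thresholds, which is why the asymptotic-normality step is routed through the equicontinuity argument above rather than a one-line Taylor expansion of the sample equation; a lesser difficulty is keeping track of the atom of $(X^*)^{\alpha_0}$ at $x_0^{\alpha_0}$ and verifying the chain $t_0\le x_0^{\alpha_0}\le x_0^{d_1}$ that makes \nelem{L1} applicable and makes the integrals collapse.
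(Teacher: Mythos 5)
Your proposal is correct and follows essentially the same route as the paper: you verify $\lambda^*_{F_W}(\alpha_0)=0$ via the same change of variables, use the key inequality $h^*(x_0^{\alpha_0})\le h^*(x_0^{d_1})=v_0\le v$ to absorb the atom of $X^*$ at $x_0$ into the lower clip, exploit monotonicity of $\psi^*_{v,u}$ in $t$ for consistency, and arrive at the identical derivative and variance constants. The only difference is cosmetic: where the paper outsources the asymptotic normality of a monotone, bounded-score $M$-estimator to Theorem A (p.~251) of Serfling, you spell out the underlying $Z$-estimator expansion and Lipschitz/equicontinuity argument directly.
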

{\remark
	\label{R2}
	(i) The difference between \cg{$\tilde\psi$ and $\psi^*$} is that $h^*$ is not the score function of $F_W^*$, the distribution of the censored risk at point $x_0$, \cg{where $0 < d_0 \le \alpha_0 \le d_1$ is needed to ensure the monotonicity of $h^*$ and $\psi^*$, see details in \eqref{def_h^*} with $\alpha = \alpha_0$}. \\
	(ii) The proposed $M$-estimations are principally based on suitable left-truncated and censored data, which are \cg{commonly} used in survival analysis, see e.g., \cite{Kundu2017analysis}. Moreover, both consistency and robustness are obtained since we bound the psi-functions to weaken the influence of the extreme outliers for the exact Weibull~models.
}

In what follows, we consider generally the Weibull-{type} risks and {investigate asymptotic properties of \cg{the} proposed $M$-estimations. 

\begin{theorem}\label{T3}
Let $X_{1}, \ldots, X_{n}$ be a random sample from  $F(x)=1-\exp\{- c_0x^{\alpha_0} \ell(x)\},\, x>0$. Suppose \cg{that} there is a unique solution $t_0$ of $\tilde\lambda_F(t)=0.$ Then
\cg{$\tilde T_n = \tilde T_n^{(v, u)}, -1 \leq v < u < \infty$}, the solution of
$$\tilde \lambda_{F_n}(t)=\sum_{i=1}^{m} \tilde \psi_{v,u}(\tilde X_i;t)=0, \quad m =\#\{1\le i\le n: X_i \ge 1\},
$$
converges in probability to $t_0$.
If further $\int_{1}^\infty \tilde \psi^2_{v, u}(x; t_0)\, d \tilde F(x) < \infty$ and $\tilde \lambda_F'(t)\neq 0$ \cg{hold} in a neighbourhood of $t_0$, then
\BQN
\label{conv_dl}
\sqrt n( \tilde T_n - t_0)\stackrel{d}{\to} N(0, \tilde \sigma^2_{v,u,t_0;F}),
\EQN
where
$$\tilde \sigma^2_{v,u,t_0;F} = e^{c_0 \ell(1)}\frac{\int_{1}^\infty \tilde \psi^2_{v,u}(x; t_0) d \tilde F(x)}{[\tilde \lambda_F'(t_0)]^2}.$$
\end{theorem}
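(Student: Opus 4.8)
Here is how I would approach the proof.

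The plan is to treat $\tilde T_n$ as the zero of a \emph{monotone} empirical estimating equation and then run the classical $M$-estimation scheme; the two non-routine features are that the clipped psi-function $\tilde\psi_{v,u}(\cdot\,;t)$ is not smooth in $t$ and that the effective sample size $m=\#\{i\le n:X_i\ge1\}$ is random. First I would record that, conditionally on $m$, the $\tilde X_1,\dots,\tilde X_m$ are i.i.d.\ copies of $\tilde X=X\mid\{X\ge1\}\sim\tilde F$ and that, by the law of large numbers, $m\to\infty$ a.s.\ with $m/n\stackrel{p}{\to}\bar F(1)=e^{-c_0\ell(1)}$; each limit below is obtained first conditionally on $m$ and then unconditionally since $m\to\infty$. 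For consistency, note that for every fixed $y\ge1$ the map $t\mapsto[\tilde h(y^{t})]_v^u$ is non-decreasing ($y^{t}$ is monotone in $t$, while clipping $\tilde h=h$ from below at $v\ge-1=h(1)$ removes the non-monotone dip of $h$ on $[1,\infty)$, making $s\mapsto[\tilde h(s)]_v^u$ non-decreasing there). Hence $t\mapsto\tilde\psi_{v,u}(y;t)$, and therefore $t\mapsto\tilde\lambda_F(t)=\int_1^\infty\tilde\psi_{v,u}(y;t)\,d\tilde F(y)$, are non-decreasing; since $\tilde\psi_{v,u}$ is uniformly bounded (values in $[v-\tilde\mu,u-\tilde\mu]$), the weak law of large numbers gives $m^{-1}\tilde\lambda_{F_n}(t)\stackrel{p}{\to}\tilde\lambda_F(t)$ for each $t$. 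Uniqueness of the zero $t_0$ together with monotonicity forces $\tilde\lambda_F(t_0-\varepsilon)<0<\tilde\lambda_F(t_0+\varepsilon)$ for all small $\varepsilon>0$, so the zero $\tilde T_n$ of the empirical equation lies in $(t_0-\varepsilon,t_0+\varepsilon)$ with probability tending to $1$; thus $\tilde T_n\stackrel{p}{\to}t_0$.

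For the asymptotic normality I would exploit the same monotonicity: the family $\{\tilde\psi_{v,u}(\cdot\,;t):|t-t_0|\le\delta\}$ of uniformly bounded, monotone-in-$x$ functions is a Donsker class, so the empirical process indexed by $t$ is asymptotically equicontinuous and
\[
\frac{1}{\sqrt m}\sum_{j=1}^{m}\bigl[\tilde\psi_{v,u}(\tilde X_j;t)-\tilde\psi_{v,u}(\tilde X_j;t_0)\bigr]-\sqrt m\,\bigl[\tilde\lambda_F(t)-\tilde\lambda_F(t_0)\bigr]=o_{\P}(1)
\]
uniformly for $|t-t_0|\le\delta$. Inserting $t=\tilde T_n$ (legitimate by the consistency just proved), and using $\tilde\lambda_{F_n}(\tilde T_n)=0=\tilde\lambda_F(t_0)$ and $\tilde\lambda_F(\tilde T_n)-\tilde\lambda_F(t_0)=\tilde\lambda_F'(t_0)(\tilde T_n-t_0)+o_{\P}(|\tilde T_n-t_0|)$ with $\tilde\lambda_F'(t_0)\neq0$, this gives
\[
\sqrt m\,(\tilde T_n-t_0)=-\frac{1}{\tilde\lambda_F'(t_0)}\,\frac{1}{\sqrt m}\sum_{j=1}^{m}\tilde\psi_{v,u}(\tilde X_j;t_0)+o_{\P}(1).
\]

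Because $\int_1^\infty\tilde\psi_{v,u}(x;t_0)\,d\tilde F(x)=\tilde\lambda_F(t_0)=0$ and $\int_1^\infty\tilde\psi_{v,u}^2(x;t_0)\,d\tilde F(x)<\infty$, the central limit theorem yields $m^{-1/2}\sum_{j=1}^{m}\tilde\psi_{v,u}(\tilde X_j;t_0)\stackrel{d}{\to}N\!\bigl(0,\int_1^\infty\tilde\psi_{v,u}^2(x;t_0)\,d\tilde F(x)\bigr)$, whence $\sqrt m\,(\tilde T_n-t_0)\stackrel{d}{\to}N\!\bigl(0,\int_1^\infty\tilde\psi_{v,u}^2(x;t_0)\,d\tilde F(x)/[\tilde\lambda_F'(t_0)]^2\bigr)$. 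To pass from $\sqrt m$ to $\sqrt n$, write $\sqrt n\,(\tilde T_n-t_0)=\sqrt{n/m}\,\sqrt m\,(\tilde T_n-t_0)$ and use $n/m\stackrel{p}{\to}e^{c_0\ell(1)}$; Slutsky's lemma multiplies the limiting variance by $e^{c_0\ell(1)}$ and produces exactly $\tilde\sigma^2_{v,u,t_0;F}$ of \eqref{conv_dl}. (Note that in general $t_0\neq\alpha_0$, because $h$ is the score function only for the exact Weibull model, which is the source of the possible bounded asymptotic bias.)

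The hard part will be the linearization step: the Huber clipping at $v$ and $u$ makes $\tilde\psi_{v,u}(y;t)$ only piecewise differentiable in $t$, so a direct second-order Taylor expansion of $m^{-1}\tilde\lambda_{F_n}$ is not available. The way around it is to lean on the \emph{monotonicity} of $t\mapsto\tilde\psi_{v,u}(y;t)$ (equivalently, that the associated class of functions is VC/Donsker) to obtain asymptotic equicontinuity of the empirical process, so that differentiability is needed only for the \emph{population} functional $\tilde\lambda_F$ near $t_0$ --- precisely the hypothesis imposed in the theorem. The residual bookkeeping, the randomness of $m$, is handled throughout by conditioning on $m$ and invoking $m\to\infty$ and $m/n\stackrel{p}{\to}e^{-c_0\ell(1)}$.
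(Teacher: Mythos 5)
Your proposal is correct, and it fills in details the paper leaves implicit. The paper's own proof is a two-line reduction: since $t\mapsto\tilde\psi_{v,u}(x;t)$ is monotone and continuous, it simply invokes Serfling's Lemma 7.2.1A (consistency for $M$-estimators with monotone psi and an isolated population root) and Theorem A on p.\,251 (asymptotic normality under $\tilde\lambda_F'(t_0)\neq0$ and finiteness/continuity of the second moment of $\tilde\psi$), exactly as in Theorems 2.1--2.2. Your consistency argument is essentially Serfling's lemma rewritten (monotonicity plus the pointwise LLN plus the sign change at $t_0\pm\varepsilon$), so there you coincide. For the normality step you take a genuinely different route: Serfling's monotone-psi theorem sidesteps any uniform empirical-process control by the inversion trick $\P\{\sqrt n(T_n-t_0)\le a\}\approx\P\{\sum_i\psi(X_i;t_0+a/\sqrt n)\le0\}$ followed by a triangular-array CLT, whereas you establish asymptotic equicontinuity via the Donsker property of uniformly bounded monotone function classes and then linearize only the population map $\tilde\lambda_F$. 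Both are sound; Serfling's route is more elementary and is what the paper's citation buys, while yours generalizes beyond monotone psi and makes explicit where the hypothesis $\tilde\lambda_F'(t)\neq0$ near $t_0$ enters. You are also more careful than the paper on two points worth keeping: the verification that clipping at $v\ge-1=h(1)$ restores monotonicity of $s\mapsto[\tilde h(s)]_v^u$ despite the dip of $h$ near $1$, and the explicit passage from $\sqrt m$ to $\sqrt n$ via $n/m\stackrel{p}{\to}e^{c_0\ell(1)}$, which is precisely the origin of the factor $e^{c_0\ell(1)}$ in $\tilde\sigma^2_{v,u,t_0;F}$.
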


\begin{theorem}\label{T4}
Let $X_{1},\ldots, X_{n}$  be a random sample from  $F(x)=1-\exp\{- c_0x^{\alpha_0} \ell(x)\},\, x>0$ and \cg{$0 < d_0 \le \alpha_0 \le d_1, x_0 = (arg min_{t \ge 1} h(t))^{1/d_0}$.}
Suppose \cg{that} there is a unique solution $t_0$ of $\lambda_F^*(t)=0.$ Then
\cg{$T_n^* = T_n^{*(v, u)}, u > v \ge v_0 = h(x_0^{d_1})$}, the solution of
$$\lambda_{F_n}^*(t)=\sum_{i=1}^{n} \psi^*_{v,u}(X_i^*;t)=0 , \quad n\in \mathbb{N}\cg{,}
$$
converges in probability to $t_0$.
If further $\int_{x_0}^\infty \psi^{*2}_{v, u}(x; t_0)\, d F^*(x) < \infty$ and $(\lambda_F^*)'(t)\neq 0$ \cg{hold} in a neighbourhood of $t_0$, then
\BQN
\label{conv_d2}
\sqrt n(T_n^* - t_0)\stackrel{d}{\to} N(0,\sigma^{*2}_{v,u,t_0;F}),
\EQN
where
$$\sigma^{*2}_{v,u,t_0;F} = \frac{\int_{x_0}^\infty\psi_{v,u}^{*2}(x; t_0) d F^*(x)}{[(\lambda_F^*)'(t_0)]^2}.$$
\end{theorem}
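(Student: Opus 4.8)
The plan is to run the standard $M$-/$Z$-estimation argument, exactly as in the proof of Theorem \ref{T3}, the only structural difference being that censoring at $x_0$ keeps all $n$ observations $X_i^*=\max(X_i,x_0)$, so no random (binomial) truncation count intervenes; this is why the limiting variance here carries no $e^{c_0\ell(1)}$ factor. Two elementary facts make the machinery run. First, by \eqref{def_psi}, $\psi^*_{v,u}(y;t)=[h^*(y^t)]_v^u-\mu^*$ with $\mu^*=v+\int_v^u\exp\{-c_0(h^*)^\leftarrow(z)\}\,dz$ free of $t$, so $\psi^*_{v,u}$ is uniformly bounded, lying in $[v-\mu^*,u-\mu^*]$. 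Second, for $y\ge x_0>1$ and $t$ in a neighbourhood of $t_0$ contained in $[d_0,d_1]$, one has $y^t\ge x_0^t\ge x_0^{d_0}=t_0$, hence $h^*(y^t)=h(y^t)$ with $h$ strictly increasing on $[t_0,\infty)$ (Lemma \ref{L1}); since $y>1$, $t\mapsto y^t$ is increasing, whence $t\mapsto\psi^*_{v,u}(y;t)$ is non-decreasing for each such $y$.

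For consistency, this monotonicity makes $t\mapsto\lambda_{F_n}^*(t)=\sum_{i=1}^n\psi^*_{v,u}(X_i^*;t)$ non-decreasing, so $T_n^*$ is well defined (say as $\sup\{t:\lambda_{F_n}^*(t)<0\}$). For each fixed $t$, the weak law of large numbers gives $n^{-1}\lambda_{F_n}^*(t)\stackrel{p}{\to}\lambda_F^*(t)$; boundedness and dominated convergence make $\lambda_F^*$ continuous and non-decreasing, vanishing only at $t_0$ by hypothesis. Then the usual sandwich argument --- for any $\varepsilon>0$, $\lambda_F^*(t_0-\varepsilon)<0<\lambda_F^*(t_0+\varepsilon)$ and $n^{-1}\lambda_{F_n}^*(t_0\pm\varepsilon)$ converge to these values, forcing $T_n^*\in(t_0-\varepsilon,t_0+\varepsilon)$ with probability tending to one --- yields $T_n^*\stackrel{p}{\to}t_0$.

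For asymptotic normality, note $\E[\psi^*_{v,u}(X^*;t_0)]=\lambda_F^*(t_0)=0$ and $\Var[\psi^*_{v,u}(X^*;t_0)]=\int_{x_0}^\infty\psi^{*2}_{v,u}(x;t_0)\,dF^*(x)<\infty$ (finite also because $\psi^*_{v,u}$ is bounded), so, the $X_i^*$ being i.i.d., the classical central limit theorem gives
\BQNY
\frac1{\sqrt n}\sum_{i=1}^n\psi^*_{v,u}(X_i^*;t_0)\stackrel{d}{\to}N\Bigl(0,\ \int_{x_0}^\infty\psi^{*2}_{v,u}(x;t_0)\,dF^*(x)\Bigr).
\EQNY
It remains to linearise $\lambda_{F_n}^*$ around $t_0$, and this is the main obstacle: $\psi^*_{v,u}(y;t)$ is not differentiable in $t$ at the thresholds $\{y:h^*(y^t)\in\{v,u\}\}$, so one cannot Taylor-expand term by term. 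I would instead argue as in Theorem \ref{T3}: the family $\{\psi^*_{v,u}(\cdot;t)\}$ is uniformly bounded and monotone in $t$, hence Donsker, so the empirical process $t\mapsto n^{-1/2}\sum_{i=1}^n[\psi^*_{v,u}(X_i^*;t)-\psi^*_{v,u}(X_i^*;t_0)]-n^{1/2}[\lambda_F^*(t)-\lambda_F^*(t_0)]$ is stochastically equicontinuous at $t_0$; this, together with the consistency of $T_n^*$, the differentiability of $\lambda_F^*$ at $t_0$, and $(\lambda_F^*)'(t_0)\neq0$, yields the $Z$-estimator expansion
\BQNY
\sqrt n\,(T_n^*-t_0)=-\frac1{(\lambda_F^*)'(t_0)}\cdot\frac1{\sqrt n}\sum_{i=1}^n\psi^*_{v,u}(X_i^*;t_0)+o_P(1).
\EQNY
Slutsky's lemma and the two displays give \eqref{conv_d2} with $\sigma^{*2}_{v,u,t_0;F}=\int_{x_0}^\infty\psi^{*2}_{v,u}(x;t_0)\,dF^*(x)\big/[(\lambda_F^*)'(t_0)]^2$. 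Beyond the non-smoothness of $\psi^*_{v,u}$ in $t$, everything is routine and mirrors the proof of Theorem \ref{T3} once the bookkeeping for the random truncation count $m$ is removed.
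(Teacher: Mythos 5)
Your proof is correct and takes essentially the same route as the paper, whose entire argument is to note that $\psi^*_{v,u}(y;t)$ is monotone and continuous in $t$ and then invoke Serfling's Lemma 7.2.1A (consistency) and Theorem A, p.~251 (asymptotic normality) for monotone psi-functions; your sandwich argument is exactly the proof of the former, and your Donsker/stochastic-equicontinuity linearisation is a valid (if slightly heavier) substitute for the latter, which handles the non-differentiability of $\psi^*$ in $t$ directly via monotonicity without empirical-process theory. Your observations that the censoring $X_i^*=\max(X_i,x_0)$ retains all $n$ observations (hence no $e^{c_0\ell(1)}$ factor) and that monotonicity of $t\mapsto h^*(y^t)$ requires $t\ge d_0$ are both correct and consistent with the paper.
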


Please note that here the $t_0$, {the unique solution of $\tilde \lambda_F$ and $ \lambda^*_F$ specified in Theorems \ref{T3} and \ref{T4}, might not be} equal to $\alpha_0$. {In other words, to maintain the robustness of the $M$-estimations is  at cost of consistency. In the next section, we shall discuss the balance via the flexible parameters $v$ and}
$u$.

\section{Robustness}\label{Section 3}

A simple criterion for choosing $v$ and $u$ in the $M$-estimations is  the trade-off between the efficiency loss (that one is willing to put up with when data are generated by a Weibull distribution), and~its asymptotic bias (when the underlying distribution deviates from the ideal Weibull distribution).
We~study below the  relative asymptotic efficiency (AEFF) in \netheo{T5}, and then analyze its influence function. {Both quantities are some functions of the flexible parameters $v$ and $u$, which enable the risk regulators to balance the robustness and consistency.}

As stated in Remark \ref{R1}, the $M$-estimation $\tilde T_n^{(v, u)}$ with $v=-1, u=\infty$ reduces to the maximum likelihood estimation of $\alpha$. Therefore, a  straightforward application of Theorems \ref{T1} and \ref{T2} leads to the following theorem.

\begin{theorem}
\label{T5}
Under the same \cg{assumptions} of Theorems \ref{T1} and \ref{T2},
\cg{we have} the relative asymptotic efficiency \cg{functions of $\tilde T_n^{(v,u)}$ and $T_n^{*(v, u)}$} (compared to $\tilde T_n^{(-1, \infty)}$, the maximum likelihood estimation) \cg{are} given by
\BQNY
&&\quad AEFF(\tilde T_n^{(v,u)}) = \frac{\tilde \sigma^2_{-1,\infty, \alpha_0; F_W}}{\tilde \sigma^{2}_{v,u,\alpha_0; F_W}} \\
&&= \frac{\left[\int_{-1}^\infty \exp\{-c_0[\tilde h^\leftarrow(z)-1]\} \, dz\right]^{2}+ 2\int_{1}^{\infty} [\tilde h(s) - \tilde\mu]\exp\{- c_0(s-1)\} d \tilde h(s)}{\left[\int_{1}^{\infty} s \ln  s \exp\{-c_0(s-1) \}  d \tilde h(s)\right]^2} \\
&&\quad\times \frac{\left[\int_{\tilde h^\leftarrow(v)}^{\tilde h^\leftarrow(u)} s \ln  s \exp\{-c_0(s-1) \}  d \tilde h(s)\right]^2}{(v-\tilde \mu)^{2}+ 2\int_{\tilde h^\leftarrow(v)}^{\tilde h^\leftarrow(u)} [\tilde h(s) - \tilde\mu]\exp\{- c_0(s-1)\} d \tilde h(s)}
\EQNY
and
\BQNY
&&\quad AEFF(T_n^{*(v,u)}) = \frac{\tilde \sigma^2_{-1,\infty, \alpha_0; F_W}}{\sigma^{*2}_{v,u,\alpha_0; F_W}} \\
&&= e^{c_0}\frac{\left[\int_{-1}^\infty \exp\{-c_0[\tilde h^\leftarrow(z)-1]\} \, dz\right]^{2}+ 2\int_{1}^{\infty} [\tilde h(s) - \tilde\mu]\exp\{- c_0(s-1)\} d \tilde h(s)}{\left[\int_{1}^{\infty} s \ln  s \exp\{-c_0(s-1) \}  d \tilde h(s)\right]^2} \\
&&\quad\times \frac{\left[\int_{(h^*)^\leftarrow (v)}^{(h^*)^\leftarrow(u)} s \ln  s \exp\{-c_0 s \}  d h^*(s)\right]^2}{(v-\mu^*)^{2}+ 2\int_{(h^*)^\leftarrow(v)}^{(h^*)^\leftarrow(u)} [h^*(s) - \mu^*]\exp\{- c_0s\} d h^*(s)}.
\EQNY

Here $\tilde\mu$ and $\mu^*$ are given by Theorems \ref{T1} and \ref{T2}, respectively.
\end{theorem}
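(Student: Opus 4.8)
The plan is to derive Theorem \ref{T5} as a direct corollary of Theorems \ref{T1} and \ref{T2} together with Remark \ref{R1}. The key observation, already flagged in Remark \ref{R1}, is that $\tilde T_n^{(-1,\infty)}$ is exactly the maximum likelihood estimator of $\alpha$ for the left-truncated Weibull model $\tilde F_W(\cdot;\alpha)$, because $-\alpha^{-1}\tilde h(x^\alpha)$ is the score function of $\tilde F_W(\cdot;\alpha)$ by Lemma \ref{L1}. Hence $\tilde\sigma^2_{-1,\infty,\alpha_0;F_W}$ equals the reciprocal of the Fisher information of the truncated model and is the benchmark asymptotic variance; by definition of relative asymptotic efficiency, $AEFF(\tilde T_n^{(v,u)})=\tilde\sigma^2_{-1,\infty,\alpha_0;F_W}/\tilde\sigma^2_{v,u,\alpha_0;F_W}$ and $AEFF(T_n^{*(v,u)})=\tilde\sigma^2_{-1,\infty,\alpha_0;F_W}/\sigma^{*2}_{v,u,\alpha_0;F_W}$.

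First I would substitute $v=-1$, $u=\infty$ into the expression for $\tilde\sigma^2_{v,u,\alpha_0;F_W}$ from Theorem \ref{T1}. Since $\tilde h^\leftarrow(-1)=1$ (the minimum value of $\tilde h$ on $[1,\infty)$ is $\tilde h(1)=-1$ with argument $t=1$) and $\tilde h^\leftarrow(\infty)=\infty$, the truncation bracket $[\tilde h(y^\alpha)]_{-1}^\infty$ equals $\tilde h(y^\alpha)$ identically, so $\tilde\mu$ at $(v,u)=(-1,\infty)$ becomes $-1+\int_{-1}^\infty\exp\{-c_0[\tilde h^\leftarrow(z)-1]\}\,dz$, and the term $(v-\tilde\mu)^2$ becomes $(-1-\tilde\mu)^2=\big(\int_{-1}^\infty\exp\{-c_0[\tilde h^\leftarrow(z)-1]\}\,dz\big)^2$ after cancelling the $-1$; the remaining integrals run over $\tilde h^\leftarrow(-1)=1$ to $\infty$. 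This produces precisely the numerator block displayed in Theorem \ref{T5}. Plugging the general $\tilde\sigma^2_{v,u,\alpha_0;F_W}$ into the denominator and cancelling the common factor $e^{c_0}\alpha_0^2/c_0^2$ (which appears in both numerator and denominator) yields the stated formula for $AEFF(\tilde T_n^{(v,u)})$, with no residual constant.

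For $AEFF(T_n^{*(v,u)})$ the same cancellation of $e^{c_0}\alpha_0^2/c_0^2$ from the numerator $\tilde\sigma^2_{-1,\infty,\alpha_0;F_W}$ occurs, but the denominator $\sigma^{*2}_{v,u,\alpha_0;F_W}$ from Theorem \ref{T2} carries only the factor $\alpha_0^2/c_0^2$ (no $e^{c_0}$), since the censored sample $X_i^*$ uses all $n$ observations rather than the $m\approx e^{-c_0}n$ truncated ones. Dividing therefore leaves a leftover factor $e^{c_0}$, which is exactly the prefactor appearing in the Theorem \ref{T5} expression for $AEFF(T_n^{*(v,u)})$. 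After that, one simply copies the numerator-over-denominator structure of $\sigma^{*2}_{v,u,\alpha_0;F_W}$ into the displayed formula. I do not anticipate any genuine obstacle here: the whole proof is bookkeeping with the formulas already established in Theorems \ref{T1} and \ref{T2}. The only point demanding a line of care is the boundary evaluation $\tilde h^\leftarrow(-1)=1$ and the resulting identity $(-1-\tilde\mu)^2=\big(\int_{-1}^\infty\exp\{-c_0[\tilde h^\leftarrow(z)-1]\}\,dz\big)^2$, together with tracking the single $e^{c_0}$ discrepancy between the truncated and censored normalizations; everything else is immediate substitution and cancellation.
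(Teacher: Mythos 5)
Your proposal is correct and matches the paper's (essentially unwritten) argument: the paper derives Theorem \ref{T5} as a direct substitution into the variance formulas of Theorems \ref{T1} and \ref{T2}, using Remark \ref{R1} to identify $\tilde T_n^{(-1,\infty)}$ with the MLE. Your bookkeeping — the evaluation $\tilde h^\leftarrow(-1)=1$, the identity $(-1-\tilde\mu)^2=\bigl(\int_{-1}^\infty\exp\{-c_0[\tilde h^\leftarrow(z)-1]\}\,dz\bigr)^2$, and the residual factor $e^{c_0}$ coming from the truncated-sample normalization $m/n\to e^{-c_0}$ — is exactly the computation the paper relies on.
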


Figure \ref{fig.2} illustrates the effect of $v$ on the relative asymptotic efficiency of $\tilde{T}^{(v, \infty)}_n$ and $T^{*(v, \infty)}_n$ (compared to the MLE $\tilde T_n^{(-1,\IF)}${)}. {For smaller $v$,  the relative asymptotic effective loss of $\tilde T_n$ is rather smaller than that of $T_n^*$. \cg{While} for larger $v$, both are asymptotically the same.}

The influence function approach, known also as the ``infinitesimal approach'', is generally employed to quantify robustness. Recall that the influence function describes the effect of some functional $T(F)$ for $F$ in an infinitesimal $\epsilon$-contamination neighbourhood $\{F_\epsilon | F_\epsilon(x) = (1-\epsilon)F(x)+\epsilon G(x)\}$}, is defined by

\begin{figure}[H]
	\centering
	\setlength{\abovecaptionskip}{0pt}
	\includegraphics[width=16cm,height=8cm]{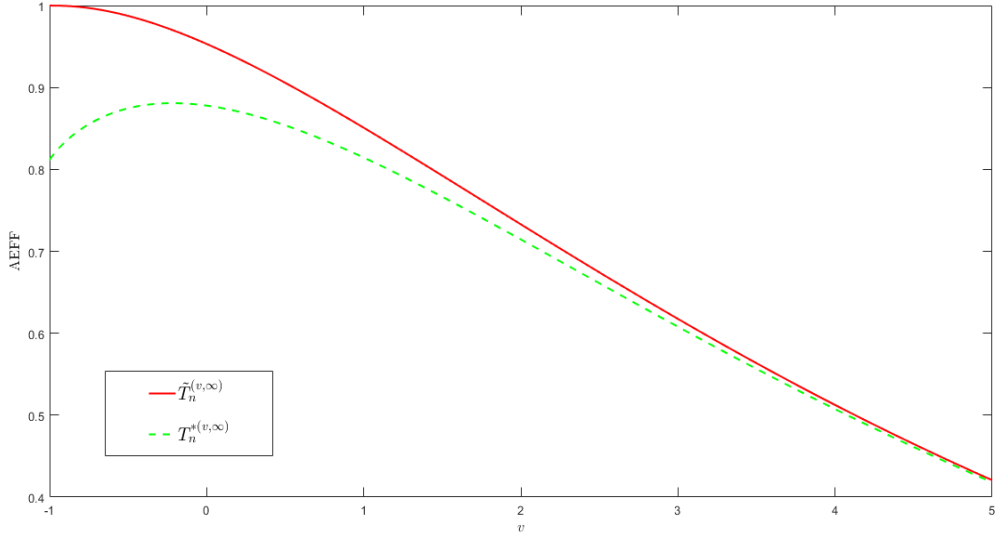}\\
	\caption{{Relative asymptotic efficiency (AEFF) of $\tilde T_n^{(v,\IF)}$  and $T_n^{*(v,\IF)}$ compared to the MLE $\tilde{T}^{(-1, \infty)}_n$. Here $F_W(x; {\alpha_0})$ is given by \eqref{def_Weibull} with $c_0=1, \alpha_0=1$}.}\label{fig.2}
\end{figure}

We have
\BQNY
IF(\tilde T;F,G) = -\frac{\int_1^\infty \tilde \psi_{v,u}(y;\tilde T(F)) d \tilde G(y)}{\tilde \lambda_F'(\cg{\tilde T(F)})}; \quad  IF(T^*;F,G) = -\frac{\int_{x_0}^\infty \psi_{v,u}^*(y;T^*(F))d G^*(y)}{(\lambda^*_F)'(\cg{T^*(F)})}.
\EQNY

In Figure \ref{fig.3}, we take $G(x)=\Gamma(x; \lambda, \beta)$ with scale parameter $\lambda = 0.5$ and  shape parameter $\beta\in(0,5)$, {which is a Weibull-type distribution with $\alpha = 1$.} Its density function $g(x; \lambda, \beta)$ is given by
\BQN
\label{d Gamma}
g(x; \lambda, \beta) = \frac{\lambda^{\beta}}{\Gamma(\beta)}x^{\beta-1}\exp\{-\lambda x\}, \quad x \cL> 0.
\EQN
We see that, \cg{the absolute values of the influence functions} of both $M$-estimations $\tilde T_n$ and $T_n^*$ \cg{are} increasing in $\beta$, and decreasing with $v$. In other words,  with increasing huberization and light-tail contamination, one \cg{gets} the reduction of sensitivity to deviations from the Weibull model.

\begin{figure}[H]
	\centering
	\setlength{\abovecaptionskip}{0pt}
	\includegraphics[width=16cm,height=6cm]{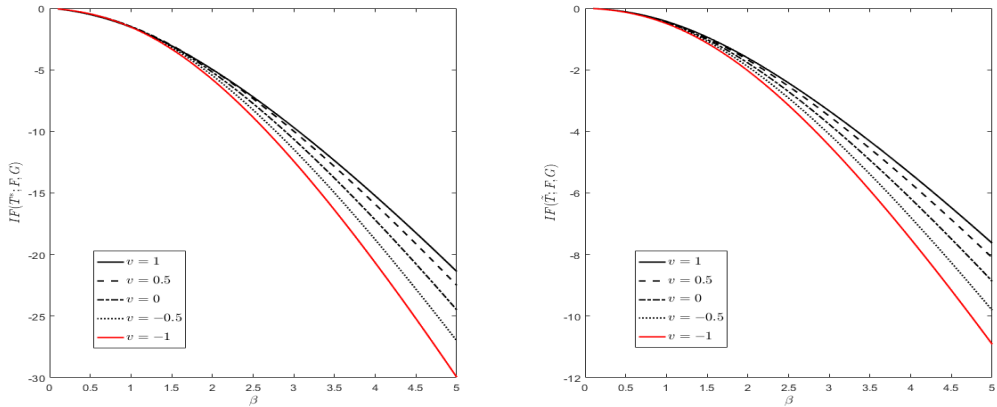}\\
	\caption{{Influence functions $IF(T;F,G)$ for $T= \tilde T^{(v, \infty)}_n$ (\textbf{left}) and $T^{*(v, \infty)}_n$ (\textbf{right}).~Here $G(x)=\Gamma(x; \lambda,\beta),\,$ $\lambda=0.5$, $\beta\in(0,5), v= \pm1, \pm 0.5, 0$ and $F(x)= F_W(x; \alpha_0)$ is given by \eqref{def_Weibull} with $c_0=1, {\alpha_0=1}$.}}\label{fig.3}
\end{figure}

\section{Simulations}
\label{Section 4}

In this section, we carry out a simulation study to illustrate the small sample {behavior} of $M$-estimations $\tilde T_n^{(v,\IF)}$ and $T_n^{*(v,\IF)}$ compared to the maximum likelihood estimation $\hat\alpha_{mle} = \tilde T_n^{(-1,\IF)}$ and the classical Hill-type estimation $\hat\alpha^{(k_n)}_{Hill}$ of the Weibull tail coefficient given by {(cf. \cite{girard2004hill})}
\BQN
\label{d Hill}
\hat{\alpha}_{Hill}^{(k_n)}=\frac{\frac{1}{k}\Sigma_{j=1}^k \log(\log\frac{n+1}{j})-\log(\log\frac{n+1}{k+1})} {\frac{1}{k}\Sigma_{j=1}^k \log(X_{n-j+1,n})-\log(X_{n-k,n})}, \quad k=1, \ldots, n-1.
\EQN

To analyze the robustness of the $M$-estimations, we generate $m=1000$ samples of size $n = 30, 50, 80$ and 100 from
Weibull distribution $F_W(x; \alpha) = 1-\exp\{-c_0 x^\alpha\}, x>0$ contaminated by Gamma distribution $\Gamma(x; \lambda, \beta)$ with contamination level $\epsilon\in(0,1)$, i.e., the underlying risk follows
\BQNY
F_{\epsilon}(x) = (1 - \epsilon)F_W(x; \alpha) + \epsilon \Gamma(x; \lambda, \beta).
\EQNY

In the simulations, we take $c_0 = 0.5, 1,2, \cg{d_0=1, d_1=2, \alpha=1,2}$ and $\lambda = \beta=0.5, \epsilon = 0.1, 0.3$. {Table \ref{tabel.1} lists}  the average estimations $\bar \alpha$, the sample variance  $s^2$ and the ratio of mean squared error (MSE) of MLE, Hill-type estimation to that of $\tilde T_n$ and $T_n^*$ with $v=0$, i.e.,  \cg{$(\hat r, \tilde r, r^*)= (\hat r_0, \tilde r_0, r_0^*)$} is given by
\cg{\BQN
	\label{def_ratio}
	\hat {r}_{v}= \frac{MSE(\hat{\alpha}^{(k_{opt})}_{Hill})}{MSE(\tilde{T}_n^{(v,\infty)})},
	\quad \tilde{r}_{v}=\frac{MSE(\hat\alpha_{mle})}{MSE(\tilde{T}_n^{(v,\infty)})},
	\quad r^*_{v}=\frac{MSE(\hat\alpha_{mle})}{MSE(T^{*(v,\infty)}_n)}.
	\EQN}

Here, we use alternatively $k_n =k_{opt}$ given by (since the traditional \cL{optimal} choice of $k_n$ in \cite{girard2004hill} is not available for small samples)
\BQNY
k_{opt} = \arg\min_{k_n\ge1} MSE(\hat\alpha_{Hill}^{(k_n)}).
\EQNY

{The last column of Table \ref{tabel.1} is} the relative proportion of $k_n$ for which $MSE(\hat{\alpha}_{Hill}^{(k_n)})\leq MSE(\tilde{T}_n^{(v,\infty)})$, denoted by $p_{Hill}$,  is given by
\BQNY
p_{Hill} = \frac{\#\{1\le k_n\le n-1: MSE(\hat{\alpha}_{Hill}^{(k_n)})\leq MSE(\tilde{T}_n^{(v,\infty)}) \}}{n-1}\times 100\%.
\EQNY

The $p_{Hill}$ {describes} {the percent} that the Hill-type estimation outperforms the estimation $\tilde{T}_n^{(v,\infty)}$.

\begin{table}[H]
	\centering
	\setlength{\abovecaptionskip}{1pt}
	\makeatletter\def\@captype{table}\makeatother\caption{Comparisons of $\tilde T_n, \, T^*_n$ with $\hat\alpha_{mle},\, \hat{\alpha}_{Hill}^{(k_n)}$. Here we take $m=1000$ samples of size $n=30$, 50, 80, 100 from $F_{\epsilon}(x)=(1-\epsilon)F_W(x; \alpha)+\epsilon \Gamma(x; 0.5, 0.5)$.}\label{tabel.1}
	\resizebox{\textwidth}{!}{\begin{tabular}{crccccccccrrrl}
			\toprule
			\multicolumn{1}{c}{\boldmath{$(\epsilon, c_0, \alpha) $}} &\boldmath{$n$} & \boldmath{$\bar{\alpha}_{mle}$} &\boldmath{$\bar{\alpha}_{Hill}$} &\boldmath{$\tilde{T}_n$} &\boldmath{$T_n^*$} &\boldmath{$s_{mle}^{2}$} &\boldmath{ $s_{Hill}^{2} $}  &\boldmath{$s_{\tilde{T}}^{2}$} &\boldmath{$s_{T^*}^{2}$} &\boldmath{$\hat{r}$} &\boldmath{$\tilde{r}$} & \boldmath{$r$}\textbf{*}  &\boldmath{$p_{Hill}$}\\ \midrule
			\multirow{4}*{(0.3, 1, 1)} &\multicolumn{1}{r}{30} &0.9217  &0.8255  &1.0006  &1.0005  &0.0072  &0.0451  &0.0018  &0.0015  &68.6811  &8.7774  &8.3678  &0.00\\
			~ &\multicolumn{1}{r}{50} &0.8609  &0.8435  &1.0022  &1.0061  &0.0068  &0.0289  &0.0014  &0.0015  &23.1288  &14.3407  &13.4344  &0.00\\
			~ &\multicolumn{1}{r}{80} &0.8274  &0.8326  &1.0083  &1.0075  &0.0041  &0.0176  &0.0013  &0.0016  &9.7485  &19.5972  &19.2393  &0.00\\
			~ &\multicolumn{1}{r}{100} &0.8161  &0.8368  &1.0147  &1.0119  &0.0030  &0.0148  &0.0018  &0.0015  &6.2607  &20.8143  &20.0842  &0.00\\\midrule
			\multirow{4}*{(0.1, 1, 1)} &\multicolumn{1}{r}{30} &0.9885  &0.9343  &0.9942  &0.9940  &0.0007  &0.0469  &0.0007  &0.0006  &5.7287  &1.2283  &1.0561  &0.00\\
			~ &\multicolumn{1}{r}{50} &0.9834  &0.9252  &0.9949  &0.9952  &0.0009  &0.0269  &0.0007  &0.0006  &3.6407  &1.8194  &1.8000  &0.00\\
			~ &\multicolumn{1}{r}{80} &0.9776  &0.9407  &0.9962  &0.9953  &0.0009  &0.0189  &0.0005  &0.0006  &1.0006  &2.5849  &2.3829  &0.00\\
			~ &\multicolumn{1}{r}{100} &0.9735  &0.9302  &0.9964  &0.9961  &0.0010  &0.0130  &0.0005  &0.0005  &0.7138  &3.2549  &2.9623  &0.15 \\\midrule
			\multirow{4}*{(0.3, 1, 2)} &\multicolumn{1}{r}{30} &1.2382  &1.6039  &1.9960  &1.9919  &0.0687  &0.2408  &0.0056  &0.0050  &74.7362  &147.2932  &126.8653  &0.00\\
			~ &\multicolumn{1}{r}{50} &1.1347  &1.6443  &2.0015  &1.9963  &0.0268  &0.1576  &0.0045  &0.0042  &22.7834  &158.1670  &165.1127  &0.00\\
			~ &\multicolumn{1}{r}{80} &1.0851  &1.6853  &2.0050  &2.0039  &0.0127  &0.1219  &0.0039  &0.0038  &7.8035  &197.7217  &180.3746  &0.00\\
			~ &\multicolumn{1}{r}{100} &1.0731  &1.6709  &2.0081  &2.0073  &0.0085  &0.0883  &0.0042  &0.0038  &5.1102  &223.2889  &186.3996  &0.00\\
			\midrule
			\multirow{4}*{(0.1, 1, 2)} &\multicolumn{1}{r}{30} &1.9245  &1.8399  &1.9903  &1.9859  &0.0169  &0.2025  &0.0026  &0.0024  &4.8833  &9.3566  &8.4148  &0.00\\
			~ &\multicolumn{1}{r}{50} &1.8459  &1.8506  &1.9900  &1.9888  &0.0239  &0.1223  &0.0022  &0.0021  &3.1233  &21.4576  &18.4649  &0.00\\
			~ &\multicolumn{1}{r}{80} &1.7654  &1.8392  &1.9873  &1.9895  &0.0228  &0.0754  &0.0017  &0.0017  &3.0758  &39.6547  &35.4584  &0.00\\
			~ &\multicolumn{1}{r}{100} &1.7249  &1.8681  &1.9898  &1.9906  &0.0190  &0.0660  &0.0018  &0.0018  &1.9142  &43.1656  &45.1557  &0.00\\\midrule
			\multirow{4}*{(0.3, 2, 1)} &\multicolumn{1}{r}{30} &0.9466  &0.8640  &0.9974  &0.9987  &0.0047  &0.0429  &0.0016  &0.0021  &82.5558  &4.6014  &3.5335  &0.00\\
			~ &\multicolumn{1}{r}{50} &0.9061  &0.8881  &0.9955  &0.9965  &0.0051  &0.0298  &0.0015  &0.0013  &22.7834  &12.7013  &10.1286  &0.00\\
			~ &\multicolumn{1}{r}{80} &0.8729  &0.8848  &0.9944  &0.9955  &0.0036  &0.0172  &0.0012  &0.0010  &3.2990  &16.1109  &16.0286  &0.00\\
			~ &\multicolumn{1}{r}{100} &0.8562  &0.8938  &0.9970  &0.9978  &0.0029  &0.0152  &0.0011  &0.0011  &1.7146  &19.8407  &18.1323  &0.00\\\midrule
			\multirow{4}*{(0.1, 2, 1)} &\multicolumn{1}{r}{30} &0.9880  &0.9223  &0.9953  &0.9956  &0.0005  &0.2773  &0.0483  &0.0011  &5.1904  &0.8848  &0.7232  &0.00\\
			~ &\multicolumn{1}{r}{50} &0.9852  &0.9557  &0.9941  &0.9952  &0.0007  &0.1438  &0.0261  &0.0006  &3.4681  &1.2380  &1.1071  &0.00\\
			~ &\multicolumn{1}{r}{80} &0.9808  &0.9452  &0.9940  &0.9942  &0.0008  &0.1775  &0.0165  &0.0005  &0.9524  &2.0353  &1.6522  &0.10\\
			~ &\multicolumn{1}{r}{100} &0.9762  &0.9560  &0.9927  &0.9939  &0.0009  &0.0444  &0.0148  &0.0006  &0.8589  &2.3773  &2.2494  &0.12\\\midrule
			\multirow{4}*{(0.3, 2, 2)} &\multicolumn{1}{r}{30} &1.3019  &1.6018  &1.9855  &1.9853  &0.0583  &0.2434  &0.0030  &0.0026  &85.6893  &153.3920  &159.4866  &0.00\\
			~ &\multicolumn{1}{r}{50} &1.2012  &1.6589  &1.9850  &1.9850  &0.0228  &0.1209  &0.0024  &0.0021  &14.7957  &260.7202  &270.0256  &0.00\\
			~ &\multicolumn{1}{r}{80} &1.1570  &1.6701  &1.9844  &1.9838  &0.0099  &1.1006  &0.0018  &0.0017  &2.5650  &348.1278  &345.6587  &0.00\\
			~ &\multicolumn{1}{r}{100} &1.1484  &1.6771  &1.9832  &1.9830  &0.0069  &0.0712  &0.0017  &0.0017  &1.4158  &386.5744  &370.8852  &0.00\\\midrule
			\multirow{4}*{(0.1, 2, 2)} &\multicolumn{1}{r}{30} &1.9238  &1.8054  &1.9885  &1.9870  &0.0161  &0.1763  &0.0027  &0.0032  &4.3161  &5.8439  &6.3442  &0.00\\
			~ &\multicolumn{1}{r}{50} &1.8519  &1.8637  &1.9886  &1.9850  &0.0212  &0.1201  &0.0031  &0.0023  &3.9388  &17.6548  &16.9291  &0.00\\
			~ &\multicolumn{1}{r}{80} &1.7646  &1.8565  &1.9869  &1.9849  &0.0200  &0.0696  &0.0017  &0.0019  &1.2588  &37.7912  &36.6744  &0.00\\
			~ &\multicolumn{1}{r}{100} &1.7402  &1.8839  &1.9849  &1.9843  &0.0181  &0.0756  &0.0017  &0.0017  &0.9791  &47.8601  &44.6477  &0.05\\\midrule
			\multirow{4}*{(0.3, 0.5, 1)} &\multicolumn{1}{r}{30} &0.9320  &0.7989  &1.0056  &1.0065  &0.0048  &0.0565  &0.0012  &0.0014  &7.2497  &7.7231  &6.7977  &0.00\\
			~ &\multicolumn{1}{r}{50} &0.8912  &0.8250  &1.0116  &1.0096  &0.0051  &0.0468  &0.0012  &0.0013  &8.8465  &12.2309  &10.2765  &0.00\\
			~ &\multicolumn{1}{r}{80} &0.8565  &0.8130  &1.0185  &1.0188  &0.0034  &0.0261  &0.0013  &0.0012  &2.8355  &15.4294  &14.8902  &0.00\\
			~ &\multicolumn{1}{r}{100} &0.8463  &0.8368  &1.0218  &1.0232  &0.0024  &0.0252  &0.0011  &0.0012  &1.3175  &17.0045  &16.5285  &0.00\\\midrule
			\multirow{4}*{(0.1, 0.5, 1)} &\multicolumn{1}{r}{30} &0.9874  &0.8848  &0.9968  &0.9943  &0.0005  &0.0428  &0.0006  &0.0006  &5.3788  &1.2157  &1.0236  &0.00\\
			~ &\multicolumn{1}{r}{50} &0.9853  &0.9136  &0.9972  &0.9952  &0.0005  &0.0295  &0.0005  &0.0005  &3.8457  &1.5111  &1.4974  &0.00\\
			~ &\multicolumn{1}{r}{80} &0.9799  &0.9193  &0.9977  &0.9975  &0.0006  &0.0181  &0.0004  &0.0005  &1.9436  &2.3528  &1.9708  &0.00\\
			~ &\multicolumn{1}{r}{100} &0.9783  &0.9165  &0.9991  &0.9989  &0.0006  &0.0143  &0.0005  &0.0004  &0.9241  &2.2865  &2.1840  &0.10\\\midrule
			\multirow{4}*{(0.3, 0.5, 2)} &\multicolumn{1}{r}{30} &1.3277  &1.5964  &2.0065  &1.8144  &0.0713  &0.2504  &0.0052  &0.0004  &61.5168  &111.5918  &15.1011  &0.00\\
			~ &\multicolumn{1}{r}{50} &1.2141  &1.6243  &2.0185  &1.8083  &0.0373  &0.1607  &0.0049  &0.0003  &31.3754  &129.6489  &17.7850  &0.00\\
			~ &\multicolumn{1}{r}{80} &1.1618  &1.6596  &2.0357  &1.8042  &0.0147  &0.1047  &0.0046  &0.0002  &13.2211  &128.0530  &18.8915  &0.00\\
			~ &\multicolumn{1}{r}{100} &1.1486  &1.6707  &2.0386  &1.8035  &0.0125  &0.0974  &0.0047  &0.0002  &4.5564  &118.1708  &19.1617  &0.00\\\midrule
			\multirow{4}*{(0.1, 0.5, 2)} &\multicolumn{1}{r}{30} &1.9443  &1.8589  &1.9900  &1.8040  &0.0093  &0.2091  &0.0020  &0.0005  &8.5329  &6.7537  &0.3454  &0.00\\
			~ &\multicolumn{1}{r}{50} &1.8936  &1.8745  &1.9935  &1.7963  &0.1316  &0.6520  &0.0020  &0.0003  &4.9060  &12.6372  &0.5832  &0.00\\
			~ &\multicolumn{1}{r}{80} &1.8329  &1.8271  &1.9958  &1.7937  &0.0720  &0.6408  &0.0018  &0.0002  &1.3524  &22.1326  &0.9232  &0.00\\
			~ &\multicolumn{1}{r}{100} &1.8125  &1.8538  &2.0024  &1.7930  &0.0670  &2.8163  &0.0017  &0.0002  &0.9561  &26.7188  &1.0363  &0.08\\
			\bottomrule
		\end{tabular}
	}
\end{table}
\newpage
We conclude from Table \ref{tabel.1} that

(i) The bias of the proposed $M$-estimations is smaller than that of Hill-type estimation and MLE estimation (see columns 2--5 for details).
	
(i\!i) The sample variance $s^2$ of our estimations is very close to zero. Note by passing that \cg{even with the \cL{optimal} choice of $k_n = k_{opt}$}, the $s^2$ of Hill-type \cg{estimations} is still relatively larger than the other (see columns 6--9 for details).
	
(i\!i\!i) Since the ratios of MSE satisfy
	$\hat r \le \cL{r^*} \le \tilde r$, we see \cg{that} the best rank estimation is $\tilde T_n$, which~coincides with the analysis of the relative  efficiency (see columns 10--12 and Figure \ref{fig.2}).
	
(i\!i\!i\!i) outperforms Hill-type estimators $\hat{\alpha}_{Hill}^{(k_n)}$ for almost all $k_n$'s. For $n = 80$, $p_{Hill}$ does not exceed $10\%$ in most cases which means that there is a set $K$ with at most $s$ = 8 of $k_n\in K$ such that the Hill-type estimators would outperform $\tilde{T}^{(0,\infty)}_n$. Similar argument holds for $n=100$. Hence, the $M$-estimations \cg{perform} better even for small samples.
	
\section{Empirical Study}
\label{Empirical}
The CRIX, a market index (benchmark), is designed by \cite{trimborn2016CRIX}. It enables each interested party to study the performance of the crypto market as a whole or single crypto market, and therefore attracts increasing attention of risk managers and regulators.
We select the daily CRIX index during 31 July 2014--1 January 2018 
(available on crix.berlin) and take all $n=713$  positive log returns of CRIX multiplied by 15 {to obtain a moderate amount of sample of size $m$ around 35--50 greater than 1 for the $M$-estimation $\tilde T_n$ (recall scaled risks keep the same tail decay feature)} as the original data sequence $X=(X_i, i=1, \ldots, n)$.

In \cg{Figure} \ref{fig.4} we employ the empirical mean excess function from extreme value theory to analyze its tail feature (set below $\mathbb I\{\cdot\}$ as the indicator function)
\BQNY
\widehat m_X(t) = \frac{\sum_{i=1}^n\left(X_i -t\right)\mathbb I\{X_i>t\}}{\sum_{i=1}^n\mathbb I\{X_i>t\}} \quad \mbox{for large} \ t,
\EQNY
where $X_i$'s \cg{are} the scaled daily log returns of CRIX. We see that the log mean excess \cg{function behaves linearly} for large threshold, indicating the Weibull tail feature of the data-set (cf. \mbox{\cite{dierckx2009new}}).

\begin{figure}[H]
	\centering
	\setlength{\abovecaptionskip}{0pt}
	\includegraphics[width=12cm,height=6cm]{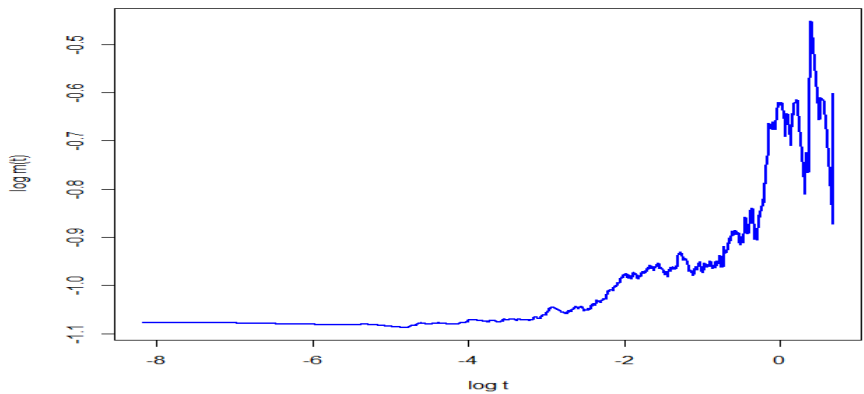}\\
	\caption{Graph of log mean excess function of scaled log returns of daily CRIX during 31 July 2014--1 January 2018.}\label{fig.4}
\end{figure}

Therefore, we illustrate the robustness of the proposed $M$-estimations $\tilde{T}^{(v,\infty)}_n$ and $T^{*(v,\infty)}_n$ with \cg{$(d_0, d_1) = (0.8848, 0.9898)$ as the $95\%$ confidence interval via MLE,} and $v=0$ using the real data-set $X$ and compare it with the Hill-type estimations $\hat\alpha_{Hill}^{(k_n)}$ given by \eqref{d Hill}. Specifically, we~consider the same contamination distribution $G(x)= \Gamma(x; 0.5, 0.5)$ and contamination level $\epsilon = 0.05i$, \mbox{$i=0,1,\ldots, 10$}. Besides, the sample fraction $k_n$ involved in the Hill-type estimations, is chosen via the bootstrap and maximum likelihood method as follows.
\BQN
\label{def_k}
k_{opt}^{(1)} = \arg\min_{k_n\ge1} |\hat\alpha_{Hill}^{(k_n)} - \bar{\alpha}_{b-Hill}^{(k_n)}|, \quad k_{opt}^{(2)} = \arg\min_{k_n\ge1} |\hat\alpha_{Hill}^{(k_n)} - \hat{\alpha}_{mle}|,
\EQN
where $\bar{\alpha}_{b-Hill}^{(k_n)}$ is the average \cg{of} Hill-type \cg{estimations} based on $m=100$ bootstrap samples, and $\hat{\alpha}_{mle}$ is the maximum likelihood estimation of the shape parameter  $\alpha$ of Weibull distribution (see \eqref{def_Weibull} for its definition).
Due to the unknown Weibull tail coefficient $\alpha$, we use alternatively the relative deviation of $\hat\alpha$ at contamination level $\epsilon$ to $\epsilon+\delta_{\epsilon}$, denoted by $D(\hat\alpha)$ to study the relative robustness. Specifically,
\BQN
D(\hat\alpha)=Deviation(\hat\alpha)=|\hat\alpha(\epsilon+\delta_{\epsilon})-\hat\alpha(\epsilon)|,
\EQN
where $\hat\alpha={\tilde{T}_n, T^*_n}, \hat\alpha_{Hill}^{(1)}$ \cg{and} $\hat\alpha_{Hill}^{(2)}$ stand for the  $M$-estimations and Hill-type estimations with {optimal} choice of $k_n$ as in \eqref{def_k}, {accordingly}.

\cg{From Table \ref{table.4}, we draw the following conclusions:} (i) As expected, the proposed $M$-estimations are not sensitive to the contaminations, since the relative \cg{deviations} of $M$-estimations are almost zero. Conversely, both Hill-type estimations with optimal choices of sample fraction have obvious deviations from no contamination to small contamination \cL{($D(\hat\alpha_{Hill}^{(1)}) = 0.1277, D(\hat\alpha_{Hill}^{(2)}) = 0.2601$ for $\epsilon = 0$)}. (ii)~The  Hill-type estimation \cg{$\hat\alpha_{Hill}^{(2)}$}, with average value around \cg{0.67}, underestimates the $\alpha$ to some extent since the averages of the other three estimations are closer to 0.80.

\begin{table}[H]
	\centering
	\setlength{\abovecaptionskip}{1.5pt}
	\caption{Estimations of Weibull tail coefficient and its relative deviations via contamination level \mbox{$\epsilon = 0.05 i$}, \mbox{$i=0,\ldots, 10$}. Data is the positive and scaled log returns of daily CRIX during 31 July 2014--1 January 2018.}\label{table.4}
	\begin{tabular}{ccccccccc}
		\toprule
		\boldmath{$\epsilon$} &\boldmath{$\tilde{T}_n$} & \boldmath{$T^*_n$} & \boldmath{$\hat\alpha_{Hill}^{(1)}$}
		&\boldmath{$\hat\alpha_{Hill}^{(2)}$} &\boldmath{$D(\tilde{T}_n)$} &\boldmath{$D(T^*_n)$} &\boldmath{$D(\hat\alpha_{Hill}^{(1)})$}  &\boldmath{$D(\hat\alpha_{Hill}^{(2)})$} \\ \midrule
		0.00    &0.7711 &0.7932 &0.9202 &0.9359 &0.0072 &0.0055 &\textbf{0.1277} 
		&\textbf{0.2601}\\
		0.05 &0.7783 &0.7987 &0.7925 &0.6758 &0.0056 &0.0060 &\textbf{0.0084} &\textbf{0.0246}\\
		0.10  &0.7839 &0.8047 &0.8009 &0.6512 &0.0002 &0.0005 &\textbf{0.0038} &\textbf{0.0028}\\
		0.15 &0.7841 &0.8052 &0.8047 &0.6484 &0.0026 &0.0144 &\textbf{0.0258} &\textbf{0.0172}\\
		0.20  &0.7867 &0.8196 &0.8305 &0.6312 &0.0093 &0.0117 &\textbf{0.0560} &0.0094\\
		0.25 &0.7960 &0.8313 &0.7745 &0.6406 &0.0046 &0.0186 &0.0168 &0.0075\\
		0.30  &0.8006 &0.8499 &0.7577 &0.6331 &0.0046 &0.0092 &0.0038 &0.0089\\
		0.35 &0.7960 &0.8407 &0.7539 &0.6420 &0.0120 &0.0084 &\textbf{0.0168} &0.0049\\
		0.40  &0.8080 &0.8491 &0.7707 &0.6371 &0.0008 &0.0096 &\textbf{0.0370} &0.0029\\
		0.45 &0.8072 &0.8587 &0.7337 &0.6400 &0.0069 &0.0052 &\textbf{0.0208} &\textbf{0.0096}\\
		0.50  &0.8003 &0.8639 &0.7545 &0.6304 &-      &-      &-      &-     \\
		\bottomrule
	\end{tabular}
\end{table}

\section{Proofs}
\label{Section 5}
\begin{proof} [Proof of Lemma \ref{L1}]Firstly, we show that $h^*(t), \, t\ge t_0$ is strictly increasing. Indeed, $h(t) = (c_0 t -1) \ln t -1, t>0$ is twice differentiable and
	\BQN
	\label{def_dh}
	h'(t) = {c_0(\ln t +1) -\frac 1 t},\quad h''(t) = \frac{c_0} t + \frac1{t^2} >0,
	\EQN
	which \cg{imply} that $h(t), t>0$ is a convex function with a unique minimum $h(t_0^*)$ where $h'(t_0^*) = 0$. Therefore, we have $t_0 = \arg \min_{t\ge 1}{h(t)} $ exists and the unique solution $t_0= \max(t_0^*, 1)$ and thus $h^*(t), t\in [t_0, \infty)$ is strictly increasing.
	Noting further for given $t_0\ge 1$ that $t_0^{1/\alpha}$ is strictly decreasing in $\alpha$, we have $h^*(x^\alpha)$ is strictly increasing in $[x_0, \infty)$ with $ x_0= t_0^{1/d_0} \ge t_0^{1/\alpha}$ since $\alpha \ge d_0$.
	
	Secondly, note that $1-\tilde F_W(x; \alpha) = \exp\{-c_0(x^\alpha-1)\}, \, x\ge1$. It follows by some elementary calculations that
	$-\alpha^{-1} \tilde h(x^\alpha)$ is the score function of $\tilde F_W(x; \alpha)$. Moreover, in view of \eqref{def_dh}, the minimizer $t_0^*$ of $h$ is decreasing in $c_0$. This together with the fact that $h(1) = -1$ implies that  $\tilde h^{\leftarrow}(y) = \inf\{t \ge 1: \tilde h(t) \ge y\}, y\in [-1, \infty)$ is strictly increasing.
\end{proof}

\begin{proof} [Proof of Theorem \ref{T1}]
	It follows by \eqref{def_psi1} that $\tilde\psi_{v,u}(y;\alpha)$ is strictly increasing and continuous in $\alpha$. Hence~it suffices to show that
	$$\tilde \lambda(\alpha):=\tilde\lambda_{F_W}(\alpha)=\int_{1}^\infty \tilde\psi_{v,u}(y;\alpha)d \tilde F_W(y;\alpha_0) $$
	has an isolated root $\alpha=\alpha_0$. We have
	\begin{align}
	\label{def_mean1}
	\tilde \lambda(\alpha)
	&=\int_{1}^{\infty}[\tilde h(y^\alpha)]_{v}^{u}d \tilde F_W(y;\alpha_0)-\int_{1}^{\infty}[\tilde h(y^\alpha)]_{v}^{u}d \tilde F_W(y;\alpha) \notag \\
	&= \int_{1}^{\infty}[\tilde h(y^\alpha)]_{v}^{u}d \tilde F_W(y;\alpha_0)- \tilde \mu, \quad \tilde \mu:=v + \int_v^u \exp\{-c_0[\tilde h^\leftarrow(z)-1]\} \, dz.
	\end{align}
	
	Next, it  follows by a change of variable $t= \tilde h(y^\alpha)$ and \cg{integration by parts} that
	\begin{align*}
	\lefteqn{ \int_{1}^{\infty}[\tilde h(y^\alpha)]_{v}^{u}d \tilde F_W(y;\alpha_0)}\\
	&=\int_{-1}^{v} v d  \tilde F_W([\tilde h^\leftarrow(t)]^{1/\alpha};\alpha_0) +\int_v^u t d \tilde F_W([\tilde h^\leftarrow(t)]^{1/\alpha};\alpha_0)  +\int_{u}^{\infty} u d \tilde F_W([\tilde h^\leftarrow(t)]^{1/\alpha};\alpha_0) \\
	&= v + \int_{v}^{u} \exp\{-c_0 [(\tilde h^\leftarrow(t))^{\alpha_0 / \alpha}-1]\} dt.
	\end{align*}
	
	Hence, $\tilde \lambda(\alpha_0) = 0$ and
	\BQN\label{def_diff1}
	\tilde \lambda'(\alpha) = \frac{c_0\alpha_{0}}{\alpha^{2}}\int_{\tilde h^\leftarrow (v)}^{\tilde h^\leftarrow(u)} s^{\alpha_0 / \alpha} \ln  s \exp\{-c_0 [s ^{\alpha_{0}/\alpha}-1]\}  d \tilde h(s) >0
	\EQN
	since  $\tilde h^\leftarrow(s)$ is strictly increasing over $[1, \infty)$ and
	\BQNY
	s> \tilde h^{\leftarrow}(v)\geq \tilde h^\leftarrow(-1) = 1.
	\EQNY
	
	Consequently, the consistency of $\tilde T_{n}$ is obtained.
	
	Next, we show the asymptotic normality of $\tilde T_n$. Set below (recall $\tilde\mu$ given in \eqref{def_mean1})
	\BQNY
	\tilde{\sigma}_{v,u}^{2}(\alpha) := \int_{1}^\infty \tilde \psi_{v,u}^2(y; \alpha) d \tilde F_W(y;\alpha_0) =\int_{1}^{\infty}([\tilde h(y^\alpha)]_{v}^{u}-\tilde \mu)^{2}d \tilde F_W(y;\alpha_0).
	\EQNY
	
	Since
	\begin{align*}
	\tilde{\sigma}_{v,u}^{2}(\alpha)
	&=\int_{-1}^{v} (v-\tilde \mu)^{2} d \tilde F_W([\tilde h^\leftarrow(t)]^{1/\alpha};\alpha_0) +\int_{v}^{u}(t-\tilde \mu)^{2}d \tilde F_W([\tilde h^{\leftarrow}(t)]^{1/\alpha};\alpha_0) \\
	& \quad +\int_{u}^{\infty} (u-\tilde \mu)^{2} d \tilde F_W([\tilde h^{\leftarrow}(t)]^{1/\alpha};\alpha_0) \\
	&= (v-\tilde \mu)^{2}+ 2\int_{v}^{u} (t-\tilde \mu)\exp\{- c_0[(\tilde h^\leftarrow(t))^{\alpha_0 / \alpha}-1]\} d t \\
	&= (v-\tilde \mu)^{2}+ 2\int_{\tilde h^\leftarrow(v)}^{\tilde h^\leftarrow(u)} (\tilde h(s)-\tilde \mu)\exp\{- c_0[s^{\alpha_0 / \alpha}-1]\} d \tilde h(s)
	\end{align*}
	is finite in a neighbourhood of $\alpha_{0}$ and continuous at $\alpha=\alpha_{0}$. It follows thus by Theorem A, p.\,251 in \cite{serfling2009approximation} that $\tilde T_n$ is asymptotically normal distributed.
	
	Furthermore, we have by \eqref{def_diff1}
	\BQNY
	\tilde \lambda'(\alpha_0) = \frac{c_0}{\alpha_0}\int_{\tilde h^\leftarrow (v)}^{\tilde h^\leftarrow(u)} s \ln  s \exp\{-c_0 (s-1) \}  d \tilde h(s) >0.
	\EQNY
	
	Hence, the asymptotic variance of $\sqrt m (\tilde T_n - \alpha_0)$ is given \cg{by}
	$$
	\tilde \sigma_{v,u,\alpha_0;F_W}^{2} = \frac{\tilde{\sigma}_{v,u}^2(\alpha_0)}{[\tilde \lambda'(\alpha_0)]^2}.
	$$
	
	Please note that $m/n \stackrel{p}{\to} P\{X \ge 1\}= \exp\{-c_0\}$. We complete the proof of \netheo{T1}.
\end{proof}

\begin{proof} [Proof of Theorem \ref{T2}]
	Similar arguments of \netheo{T1} \cg{apply} with $\tilde \psi, \tilde F_W$ and $\tilde h$ replaced by  $\psi^*, F^*$~and $h^*$\cg{, respectively}.
	First we show the consistency of $T_{n}^*$. It follows by \eqref{def_psi} that $\psi_{v,u}^*(y;\alpha)$ is strictly increasing and continuous in $\alpha$. Hence it suffices to show that
	$$ \lambda^*(\alpha):=\lambda_{F_W}^*(\alpha)=\int_{x_0}^\infty \psi^*_{v,u}(y;\alpha)dF_W^*(y;\alpha_0) $$
	has an isolated root $\alpha=\alpha_0$. \cg{We have}
	\begin{align}
	\label{def_mean}
	\lambda^*(\alpha)&=\int_{x_0}^{\infty}\psi_{v,u}^*(y;\alpha)dF_W^*(y;\alpha_0) \notag \\
	&=\int_{x_0}^{\infty}[h^*(y^\alpha)]_{v}^{u}dF_W^*(y;\alpha_0)-\int_{x_0}^{\infty}[h^*(y^\alpha)]_{v}^{u}d F_W^*(y;\alpha)  \notag \\
	&= \int_{x_0}^{\infty}[h^*(y^\alpha)]_{v}^{u}dF_W^*(y;\alpha_0)- \mu^*, \, \mu^* := v + \int_v^u \exp\{-c_0(h^*)^\leftarrow(z)\} \, dz.
	\end{align}
	
	Next, it  follows by a change of variable $t= h^*(y^\alpha)$ and \cg{integration by parts} that
	\begin{align}
	\label{def_h^*}
	\int_{x_0}^{\infty}[h^*(y^\alpha)]_{v}^{u}dF_W^*(y;\alpha_0) &=\int_{h^*(x_0^\alpha)}^{v} v d F_W([(h^*)^\leftarrow(t)]^{1/\alpha};\alpha_0) +\int_v^u t dF_W([(h^*)^\leftarrow(t)]^{1/\alpha};\alpha_0)  \notag \\
	& \quad +\int_{u}^{\infty} u d F_W([(h^*)^\leftarrow(t)]^{1/\alpha};\alpha_0) + F_W(x_0; \alpha_0) [h^*(x_0^\alpha)]|_v^u  \notag \\
	&=v [1- F_W(x_0;\alpha_0)] + \int_{v}^{u}[1-F_W([(h^*)^\leftarrow(t)]^{1/\alpha};\alpha_0)] dt + v F_W(x_0; \alpha_0) \notag \\
	&= v + \int_{v}^{u} \exp\{-c_0 ((h^*)^\leftarrow(t))^{\alpha_0 / \alpha}\}dt,
	\end{align}
	\cg{where in the second equality we use $h^*(x_0^\alpha) \le h^*(x_0^{d_1}) =v_0 \le v$.} Hence, $\lambda^*(\alpha_0) = 0$ and
	\BQN\label{def_diff}
	(\lambda^*)'(\alpha) = \frac{c_0\alpha_{0}}{\alpha^{2}}\int_{(h^*)^\leftarrow (v)}^{(h^*)^\leftarrow(u)} s^{\alpha_0 / \alpha} \ln  s \exp\{-c_0 s ^{\alpha_{0}/\alpha}\}  d h^*(s) >0
	\EQN
	since  $h^*(s)$ is strictly increasing over $(x_0^{d_0}, \infty)$ and
	\BQNY
	s> (h^*)^{\leftarrow}(v)\geq (h^*)^\leftarrow(v_0)= \cg{x_0^{d_1} = t_0^{d_1/d_0}} \ge 1.
	\EQNY
	
	Consequently, the consistency of $T^*_{n}$ is obtained.
	
	Next, we show the asymptotic normality of $T_n^*$. Set below (recall $\mu^*$ given by \eqref{def_mean})
	\BQNY
	(\sigma^*_{v,u})^{2}(\alpha) := \int_{x_0}^\infty(\psi_{v,u}^*)^2(y; \alpha) d F_W^*(y;\alpha_0) =\int_{x_0}^{\infty}([h^*(y^\alpha)]_{v}^{u}-\mu^*)^{2}d F_W^*(y;\alpha_0).
	\EQNY
	
	Since
	\begin{align*}
	({\sigma}^*_{v,u})^{2}(\alpha)
	&=\int_{h^*(x_0^\alpha)}^{v} (v-\mu^*)^{2} dF_W([(h^*)^\leftarrow(t)]^{1/\alpha};\alpha_0) +\int_{v}^{u}(t-\mu^*)^{2}d F_W([(h^*)^{\leftarrow}(t)]^{1/\alpha};\alpha_0) \\
	& \quad +\int_{u}^{\infty} (u-\mu^*)^{2} dF_W([(h^*)^{\leftarrow}(t)]^{1/\alpha};\alpha_0) + F_W(x_0; \alpha_0) (v-\mu^*)^2\\
	&= (v-\mu^*)^{2}+ 2\int_{v}^{u} (t-\mu^*)\exp\{- c_0((h^*)^\leftarrow(t))^{\alpha_0 / \alpha}\} d t \\
	&= (v-\mu^*)^{2}+ 2\int_{(h^*)^\leftarrow(v)}^{(h^*)^\leftarrow(u)} (h^*(s)-\mu^*)\exp\{- c_0s^{\alpha_0 / \alpha}\} d h^*(s)
	\end{align*}
	
	is finite in a neighbourhood of $\alpha_{0}$ and continuous at $\alpha=\alpha_{0}$, it follows by Theorem A, p.\,251 in \cite{serfling2009approximation} that $T_n^*$ is asymptotically normal distributed.
	
	Furthermore, we have by \eqref{def_diff}
	\BQNY
	(\lambda^*)'(\alpha_0) = \frac{c_0}{\alpha_0}\int_{(h^*)^\leftarrow (v)}^{(h^*)^\leftarrow(u)} s \ln  s \exp\{-c_0 s \}  d h^*(s) >0.
	\EQNY
	
	Hence, the asymptotic variance is given \cg{by}
	$$
	\sigma_{v,u,\alpha_0;F}^{2} = \frac{({\sigma}_{v,u}^*(\alpha_0))^{2}}{[(\lambda^*)'(\alpha_0)]^2}.
	$$
	
	We complete the proof of \netheo{T2}.
\end{proof}

\begin{proof} [Proof of Theorem \ref{T3}] The result follows by analogous arguments as in the proof of \netheo{T1}. Since $\tilde \psi_{v, u}(x; \alpha)$ is \cg{strictly increasing and contionuous} in $\alpha$, the assumptions of \netheo{T3} are sufficient for the consistency and asymptotic normality of $\tilde T_{n}$. Using further  Lemma 7.2.1A and Theorem A (see \cg{p.\,}249 and 251 therein) by \cite{serfling2009approximation}, we complete the proof of \netheo{T3}. \end{proof}

\begin{proof} [Proof of Theorem \ref{T4}] The result follows by analogous arguments as in the proof of \netheo{T2}. Since $\psi^*_{v, u}(x; \alpha)$ is \cg{strictly increasing and continuous} in $\alpha$, the assumptions of \netheo{T4} are sufficient for the consistency and asymptotic normality of $T_{n}^*$. Using further  Lemma 7.2.1A and Theorem A (see \cg{p.\,}249 and 251 therein) by \cite{serfling2009approximation}, we complete the proof of \netheo{T4}.
\end{proof}

\textbf{Acknowledgments.} The authors would like to thank the referee for his$\setminus$her important suggestions which significantly improve
this contribution.

\vskip15pt

\bibliographystyle{plain}
\bibliography{Robust_estimations_for_the_tail_index_of_Weibull-type_distribution}

\vskip15pt

\end{document}